\documentclass[a4paper,12pt]{amsart}

\usepackage{bbm}
\usepackage{units}
\usepackage{mathtools}
\usepackage{cancel}
\usepackage{amsmath}
\usepackage{amsfonts}
\usepackage{latexsym}
\usepackage{amssymb}
\usepackage{mathrsfs}
\usepackage{amscd}
\usepackage{hyperref}
\usepackage{soul}
\usepackage{psfrag}
\usepackage{graphicx}
\usepackage{ulem}
\usepackage{fullpage}
\usepackage[all]{xy}
\usepackage{rotating}
\usepackage{url}
\usepackage{color}
\usepackage{enumerate}
\usepackage{cleveref}

\hypersetup{
	colorlinks=true, 
	linkcolor=red, 
	citecolor=blue,
}

\usepackage{dsfont}
\usepackage{tikz}
\usetikzlibrary{arrows,backgrounds,arrows.meta,decorations.markings}
\usepackage{tikz-cd}


\numberwithin{equation}{section}
\numberwithin{figure}{section}

\theoremstyle{plain}
\newtheorem{thm}{Theorem}[section]

\theoremstyle{plain}
\newtheorem{lem}[thm]{Lemma}

\theoremstyle{remark}
\newtheorem{rem}[thm]{Remark}

\theoremstyle{plain}
\newtheorem{cor}[thm]{Corollary}

\theoremstyle{definition}
\newtheorem{defn}[thm]{Definition}

\theoremstyle{definition}

\theoremstyle{definition}

\theoremstyle{plain}
\newtheorem{prop}[thm]{Proposition}

\theoremstyle{plain}

\theoremstyle{definition}

\theoremstyle{plain}


\newcommand{\comments}[1]{}

\newcommand{\rab}{\rangle}

\newcommand{\lab}{\langle}

\newcommand{\mcal}{\mathcal}

\newcommand{\N}{\mathbb N}

\newcommand{\C}{\mathbb{C}}

\newcommand{\vlon}{\varepsilon}



\setlength{\parskip}{0.5ex plus 0.1ex minus -0.2ex}


\title{Peripheral Poisson boundary on full Fock space}
\author{Mainak Ghosh}

\begin{document}
	\maketitle
	\global\long\def\vlon{\varepsilon}
	\global\long\def\bt{\bowtie}
	\global\long\def\ul#1{\underline{#1}}
	\global\long\def\ol#1{\overline{#1}}
	\global\long\def\norm#1{\left\|{#1}\right\|}
	\global\long\def\os#1#2{\overset{#1}{#2}}
	\global\long\def\us#1#2{\underset{#1}{#2}}
	\global\long\def\ous#1#2#3{\overset{#1}{\underset{#3}{#2}}}
	\global\long\def\t#1{\text{#1}}
	\global\long\def\lrsuf#1#2#3{\vphantom{#2}_{#1}^{\vphantom{#3}}#2^{#3}}
	\global\long\def\tr{\triangleright}
	\global\long\def\tl{\triangleleft}
	\global\long\def\cc90#1{\begin{sideways}#1\end{sideways}}
	\global\long\def\turnne#1{\begin{turn}{45}{#1}\end{turn}}
	\global\long\def\turnnw#1{\begin{turn}{135}{#1}\end{turn}}
	\global\long\def\turnse#1{\begin{turn}{-45}{#1}\end{turn}}
	\global\long\def\turnsw#1{\begin{turn}{-135}{#1}\end{turn}}
	\global\long\def\fusion#1#2#3{#1 \os{\textstyle{#2}}{\otimes} #3}
	
	\global\long\def\abs#1{\left|{#1}\right	|}
	\global\long\def\red#1{\textcolor{red}{#1}}
 
 \begin{abstract}
 	The operator space generated by peripheral eigenvectors of a unital normal completely positive map $P$ on a von Neumann algebra has a C*-algebra structure. This C*-algebra is known as the \textit{peripheral Poisson boundary} of $P$. For a separable Hilbert space $H$, consider the full fock space defined over $H$. In this paper, we study the peripheral Poisson boundary of the completely positive map, induced by left creation operators of the basis vectors of $H$, on $B(\mcal F(H))$ and explore its behavior with respect to the Poisson boundary.
 \end{abstract}

\section{Introduction}

The notion of noncommutative poisson boundaries for normal unital completely positive (ucp) maps was introduced by M. Izumi \cite{Izm02}, extending the notion of Poisson boundary theory for random walks on groups. M. Izumi also studied the structure of higher relative commutants of the core inclusions of finite index subfactors of $\t{II}_1$ factors as noncommutative Poisson boundaries of ucp maps on finite type $\t{I}$ von Neumann algebras \cite{Izm04}.
 
Suppose $N$ is a von Neumann algebra and $L$ a self-adjoint linear subspace of $N$ containing the identity (that is, $L$ is a weakly closed operator system in $N$). By virtue of \cite[Theorem 3.1]{CE}, a completely positive projection $E : N \to L $ induces a von Neumann algebra structure on $L$ with respect to the `Choi-Effros' product,
\[ x \circ y = E(xy) \ \ \t{for all} \ \ x, y \in L . \]

Let $P : N \to N$ be a normal ucp map. The operator space of fixed points of $P$ becomes a von Neumann algebra using the `Choi-Effros' product. This von Neumann algebra (or its concrete realization) is known as the \textit{noncommutative Poisson boundary} of $\left(P, N\right)$. Recently, this concept has gained considerable attention in various contexts \cite{Izm04, Izm12, BKT, BBDR, DGGJ}.
\vspace*{2mm}

A spectral approach of decomposing the dynamics of open quantum systems into persistent and transient parts was sought in \cite{BKT} in the Heisenberg picture.
In \cite{BKT}, the operator space formed by eigenvectors corresponding to eigenvalues on the unit circle for ucp maps were investigated. The authors imposed a new product, using dilation theory, on this operator space  to obtain a C*-algebra. This C*-algebra is termed as the \textit{peripheral Poisson boundary}. The product on the peripheral Poisson boundary is an extension of the `Choi-Effros' product on the Poisson boundary. Surprisingly, peripheral Poisson boundary may not always be a von Neumann algebra (see \cite{BKT}).

\vspace*{2mm}

Suppose $H$ is a separable Hilbert space with an orthonormal basis $\left\{e_i : i \in \Theta \right\}$. Consider the full fock space $\mcal F(H)$ defined over $H$ and a sequence of positive real numbers $\left\{\omega_i : i \in \Theta \right\}$ such that, $\us{i \in \Theta}{\sum} \omega_i = 1 $. Define the normal ucp $P_\omega$ on $B(\mcal F(H))$ as,
\[ P_\omega (x) = \us{i \in \Theta}{\sum} \omega_i l_{e_i}^* x l_{e_i} \ \ \t{for all} \ \ x \in B(\mcal F(H)) . \]
The Poisson boundary for the ucp map $P_\omega$ was studied in \cite{BBDR}. Our goal in this paper is to study the peripheral Poisson boundary associated to $P_\omega$, and to examine the relationship between peripheral Poisson boundary and the Poisson boundary of $P_\omega$. 

We will see in \Cref{ppb}, that peripheral Poisson boundary is independent (upto *-isomorphism) of the choice of orthonormal basis of $H$, and that it strictly contains the Poisson boundary as a C*-subalgebra. We then move on to study the behavior of the Poisson boundary with respect to the peripheral Poisson boundary. The following is the main theorem of the paper.

\subsection*{Theorem A}   
\textit{The relative commutant of the poisson boundary with respect to the peripheral Poisson boundary is trivial}.

\vspace*{2mm}
As a corollary, we obtain the following result.
\subsection*{Corollary A}
\textit{The center of the peripheral Poisson boundary is trivial}.

\vspace*{2mm}
Next we provide a description of product in the peripheral Poisson boundary and move on to address the following question :

\subsection*{Question 1}
\textit{Is there a conditional expectation from the peripheral Poisson boundary onto the Poisson boundary ?}

\vspace*{2mm}

We answer the above question in the affirmative.
In \cite{BKT}, for any normal ucp $\tau : M \to M $, each eigenspace (corresponding to eigenvalues in the unit circle) has been equipped with natural left and right actions of the \textit{Poisson boundary}, and with an inner product taking values in the Poisson boundary, making it a Hilbert C*-bimodule over the Poisson boundary. This approach is not going to work in general if we want to make the whole peripheral Poisson boundary a pre-Hilbert C*-bimodule over the Poisson boundary. We handle this in \Cref{ppb}, by making use of the conditional expectation from the peripheral Poisson boundary onto the Poisson boundary, and prove the following theorem.

\subsection*{Theorem B}
\textit{The peripheral Poisson boundary becomes a pre-Hilbert C*-bimodule over the Poisson boundary}.
 
\vspace*{2mm}
We briefly discuss the contents of this article.

In \Cref{prelim}, we will quickly go through some basic results and definitions, and set up notations, which will be used later.

\Cref{ppb} begins by establishing the fact that the peripheral Poisson boundary is independent of the choice of the orthonormal basis of the Hilbert space $H$. Further, we show strict inclusion of the Poisson boundary as a C*-subalgebra of the peripheral Poisson boundary. Then we compute several important formulas and use them to prove \Cref{centerthm}. Then we move on to describe the product in the peripheral Poisson boundary. Finally, we build a conditional expectation from the peripheral Poisson boundary onto the Poisson boundary.

\subsection*{Acknowledgements}
The author would like to thank Prof. B. V. R. Bhat and Dr. Narayan Rakshit for several fruitful discussions at various stages of the project. A part of the project was completed when the author visited Indian Statistical Institute, Bangalore Centre. The author would like to thank the Stat-Math unit for the warm hospitality. The author gratefully acknowledges the support received from Prof. Bhat (J C Bose Fellowship No. JBR/2021/000024) for funding his travel expenses.

\section{Preliminaries}\label{prelim}

In this section we recall various standard facts concerning full Fock spaces over Hilbert spaces and the Peripheral Poisson boundary of a ucp map, pertaining to our results. We refer the reader to \cite{BKT, BBDR}, and the references therein for a more detailed study. Throughout our discussion $H$ will denote a separable Hilbert space with an orthonormal basis $\left\{e_i : i \in \Theta \right\}$, where $\Theta$ stands for the set $\left\{1,2, \cdots, n \right\}$ ($n \in \N$, $n > 1 $) or the set $\N$.

\subsection{Full fock spaces over a Hilbert space}\

With notations as above, consider the full fock space over $H$ defined by, 
\[ \mcal F(H) = \us{n \geq 0}{\oplus} H^{\otimes n}\]
where $H^{\otimes 0} \coloneqq \C \Omega$ and for $n \geq 1$, $H^{\otimes n}$ is the (Hilbert space) tensor product of $n$-copies of $H$. Here $\Omega$ is a fixed complex number with modulus $ 1 $ and we refer it as vacuum vector.

Let $\Lambda$ and $\Lambda^*$ denote the sets,
\[ \Lambda = \us{n \geq 0}{\bigcup} \Theta^n \ \ \t{and} \ \ \Lambda^* = \us{n \geq 1}{\bigcup} \Theta^n , \]
where for $n \geq 1$, $\Theta^n$ denotes the $n$-fold Cartesian product of $\Theta$ and $\Theta^0 = \left\{ () \right\}$, where $()$ is the empty tuple. The elements of $\Theta^n$, $n \geq 1$, are referred to as sequences of length $n$. If $I$ is a sequence of length $n$, $n \geq 1$, say $I = (i_1, i_2, \cdots, i_n)$, we shall, in the sequel, simply write $i_1 i_2 \cdots i_n $ for $I$. We will treat the empty tuple as a sequence of length of $0$. If $I$ is a sequence in $\Lambda$, we denote its length by $\abs{I}$. For $I= ()$, the empty tuple, we set \[ e_I \coloneqq \Omega , \]
and for $I \in \Lambda^*$, say $I = i_1 i_2 \cdots i_k$, we define \[ e_I \coloneqq e_{i_1} \otimes e_{i_2} \otimes \cdots \otimes e_{i_k}\]

With these notations, $\mcal B \coloneqq \left\{ e_I : I \in \Lambda \right\}$ forms an orthonormal basis for $\mcal F(H)$. For $I \in \Lambda^*$, we will call $I^{op}$ the sequence which is reverse to that of $I$, that is, if
\[ I = i_1 i_2 \cdots i_k, \ \ \t{then} \ \ I^{op} = i_k \cdots i_2 i_1 .  \]

Let $I = i_1 i_2 \cdots i_k \in \Theta^k$, $k \geq 0$. For any non-negative integer $m \leq k$, we denote by $I_m$ the subsequence of $I$ of length $m$ defined by \[ I_m = \begin{cases} 
       \t{empty tuple}, \ \ \t{if} \ \ m = 0 \\
       I_m = i_1 i_2 \cdots i_m \ \ \t{if} \ \ 1 \leq m \leq k        \end{cases}\]
       
If $I, J$ are two sequences in $\Lambda$, then $IJ$ will denote the sequence of length $\abs{I} + \abs{J}$ obtained by juxtaposition. That is, if $I = i_1 i_2 \cdots i_k$, and $J = j_1 j_2 \cdots j_l$, then \[ IJ = i_1 \cdots i_k j_1 \cdots j_l \] 

Further, given a sequence $I \in \Lambda$ and $n \in \N$, we denote by $I^n$ to be the sequence $\underbrace{I \cdots I}_\t{$n$ times} $.

\begin{defn}For each $\xi \in H$, 	
	\begin{itemize}
		\item [(i)] the \textit{left creation operator} associated with $\xi$, denoted by $l_\xi$, is the bounded operator on $\mcal F(H)$ that satisfies, \[ l_\xi (\eta) = \xi \otimes \eta \ \ \t{for all} \ \ \eta \in \mcal F(H) .  \]     
\item [(ii)]  the \textit{right creation operator} associated with $\xi$, denoted by $r_\xi$, is the bounded operator on $\mcal F(H)$ that satisfies, \[ r_\xi (\eta) =  \eta \otimes \xi \ \ \t{for all} \ \ \eta \in \mcal F(H) . \]
	\end{itemize}
\end{defn}

The adjoint of $l_\xi^*$ (respectively, $r_\xi^*$) of $l_\xi$ (respectively, $r_\xi$) is called the left (respectively, right) annihilation operator associated with $\xi$. It is easy to verify that $l_\xi^*$ and $r_\xi^*$ satisfy the relations:
\[ l_\xi^* (\Omega) = 0, \ \ \t{and} \ \ l_\xi^* (\eta_1 \otimes \eta_2 \otimes \cdots \otimes \eta_k) = \lab \eta_1, \xi \rab \eta_2 \otimes \cdots \otimes \eta_k \ \ \t{for} \ k \in \N, \eta_i \in H . \]
\[ r_\xi^* (\Omega) = 0, \ \ \t{and} \ \ r_\xi^* (\eta_1 \otimes \eta_2 \otimes \cdots \otimes \eta_k) = \lab \eta_k, \xi \rab \eta_1 \otimes \cdots \otimes \eta_{k-1} \ \ \t{for} \ k \in \N, \eta_i \in H .  \]

\subsection*{Notations}
For $i \in \Theta$, we simply use the notation $r_i$ (respectively, $l_i$) to denote $r_{e_i}$ (respectively, $l_{e_i}$). For $I \in \Lambda$, we define $l_I$ and $r_I$ as follows,
\[ l_I =  r_I = 1 \ \ \t{if $I$ is the empty tuple} \]
\[l_I = l_{i_1} l_{i_2} \cdots l_{i_k} \ \ \t{and} \ \ r_I = r_{i_1} r_{i_2} \cdots r_{i_k} \ \ \t{if $I = i_1 i_2 \cdots i_k$} . \]

We record the following result which will be useful later for our purposes.

\begin{lem}\
	
	\begin{itemize}
		\item [(i)] $l_i^* p_\Omega = r_i^* p_\Omega = p_\Omega l_i = p_\Omega r_i = 0 $ for all $i \in \Theta$, where $p_\Omega$ denotes the orthogonal projection of $\mcal F(H)$ onto $\C \Omega$
		
		\item [(ii)] For any $\xi, \eta \in H$,
		\[ r_\xi^* r_\eta = l_\xi^* l_\eta = \lab \eta, \xi \rab 1, r_\xi^* l_\eta = l_\eta r_\xi^* + \lab \eta, \xi \rab p_\Omega \ \ \t{and} \ \ l_\eta^* r_\xi = r_\xi l_\eta^* + \lab \xi, \eta \rab p_\Omega \]
		In particular,
		\[ r_i^* r_j = l_i^* l_j = \delta_{i,j}, r_i^* l_j = l_j r_i^* + \delta_{i,j} p_\Omega \ \ \t{and} \ \ l_i^* r_j = r_j l_i^* + \delta_{i,j} p_\Omega \ \ \t{for all} \ \ i,j \in \Theta \]
	\end{itemize}
\end{lem}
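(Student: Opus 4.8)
The plan is to verify each identity by evaluating both sides on the orthonormal basis $\mcal B = \{e_I : I \in \Lambda\}$ of $\mcal F(H)$ and invoking the explicit creation/annihilation formulas recorded above. Since in every case both sides are bounded operators that agree on a total set, equality of the operators will follow.

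For (i), I would first record that $p_\Omega \zeta = \langle \zeta, \Omega \rangle \Omega$ for every $\zeta \in \mcal F(H)$, since $p_\Omega$ is the orthogonal projection onto the one-dimensional space $\C\Omega$. Applying $l_i^*$ (respectively $r_i^*$) and using $l_i^* \Omega = 0$ (respectively $r_i^* \Omega = 0$) gives $l_i^* p_\Omega = r_i^* p_\Omega = 0$ at once. The remaining two identities are then free of further computation: because $p_\Omega$ is self-adjoint, $p_\Omega l_i = (l_i^* p_\Omega)^* = 0$ and $p_\Omega r_i = (r_i^* p_\Omega)^* = 0$.

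For (ii), the two ``annihilation-after-creation'' relations are direct. Applying $l_\eta$ to $e_I$ prepends $\eta$, and $l_\xi^*$ then reads off the first tensor leg, so that $l_\xi^* l_\eta e_I = \langle \eta, \xi\rangle e_I$; checking the vacuum ($I$ empty) separately via $l_\eta \Omega = \eta$ and $l_\xi^* \eta = \langle \eta,\xi\rangle \Omega$ confirms $l_\xi^* l_\eta = \langle \eta,\xi\rangle 1$, and the entirely symmetric right-handed computation yields $r_\xi^* r_\eta = \langle \eta,\xi\rangle 1$.

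The only identity requiring genuine care is the mixed commutation relation $r_\xi^* l_\eta = l_\eta r_\xi^* + \langle \eta,\xi\rangle p_\Omega$, and this is where the vacuum defect term appears. On a vector $\zeta = \zeta_1 \otimes \cdots \otimes \zeta_k$ with $k \geq 1$, the left side prepends $\eta$ and then annihilates the last leg $\zeta_k$, while on the right side $r_\xi^*$ first annihilates $\zeta_k$ and $l_\eta$ then prepends $\eta$; these agree, and $p_\Omega \zeta = 0$, so the defect is invisible on positive-length vectors. On $\Omega$, however, the left side gives $r_\xi^* \eta = \langle \eta, \xi\rangle \Omega$, the term $l_\eta r_\xi^* \Omega$ vanishes, and the discrepancy is supplied exactly by $\langle\eta,\xi\rangle\, p_\Omega \Omega = \langle \eta,\xi\rangle \Omega$; this single vacuum computation is the one step that is nonautomatic. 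Finally, taking adjoints of this relation, using $p_\Omega^* = p_\Omega$ together with the conjugate symmetry $\overline{\langle\eta,\xi\rangle} = \langle\xi,\eta\rangle$, yields $l_\eta^* r_\xi = r_\xi l_\eta^* + \langle\xi,\eta\rangle p_\Omega$. The ``in particular'' statements are then simply the specializations $\xi = e_i$, $\eta = e_j$, using $\langle e_j, e_i\rangle = \delta_{i,j}$.
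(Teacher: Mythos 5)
Your proof is correct, and it is exactly the elementary verification the paper leaves to the reader: the paper records this lemma without proof (prefacing it with ``it is easy to verify''), the intended argument being precisely your direct evaluation on the vacuum and on elementary tensors, followed by taking adjoints. In particular, you correctly isolate the only nontrivial point, the vacuum defect term $\lab \eta, \xi \rab p_\Omega$ in the mixed relation, and your adjoint bookkeeping ($\overline{\lab \eta,\xi\rab} = \lab \xi,\eta\rab$) matches the asymmetry in the two displayed commutation relations.
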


\subsection{Peripheral Poisson boundary}\

Suppose $\mcal A$ is a von Neumann algebra and $\tau : \mcal A \to \mcal A $ is a normal ucp map. The fixed point space of $\tau$ (also known as the noncommutative Poisson boundary of $\tau$), \[ F(\tau) \coloneqq \left\{ x \in \mcal A : \tau (x)= x \right\} . \]
forms a von Neumann algebra with respect to the `Choi-Effros' product `$\circ$' \cite{Izm12}. Namely, for two elements $x, y \in F(\tau)$, the product is given by, \[ x \circ y = \t{SOT}-\us{n \to \infty}{\t{lim}} \tau^n (xy) . \]

\vspace*{2mm}

For a normal ucp map $\tau : \mcal A \to \mcal A $ and for $\lambda \in \mathbbm{T}$, define \[ E_\lambda (\tau) \coloneqq \left\{ x \in \mcal A : \tau(x) = \lambda x \right\} . \]
Take $E(\tau) \coloneqq \t{span}\left\{x \in \mcal A : x \in E_\lambda (\tau) \ \ \t{for some} \ \ \lambda \in \mathbbm{T} \right\}$. Define $P(\tau) = \ol{\t{span}} E(\tau)$. The operator space $P(\tau)$ is called the \textit{peripheral Poisson boundary} of $\tau$. In the next result, we observe that $P(\tau)$ becomes a C*-algebra with respect to a modified `Choi-Effros' product.

\begin{thm}\cite{BKT}
	Let $\mcal A \subseteq B(H)$ be a von Neumann algebra and let $\tau : \mcal A \to \mcal A$ be a normal unital completely positive map. Let $\left(K, \mcal B, \theta\right)$ be the minimal dilation of $\tau$. Let $T$ denote the compression map $z \mapsto pzp$ restricted to $P(\theta)$. Then the completely positive map $T$ maps the C*-algebra $P(\theta)$ isometrically and bijectively to $P(\tau)$. In particular, setting, \[ x \circ y \coloneqq T(T^{-1}(x) T^{-1}(y)) \ \ \t{for all} \ \ x, y \in P(\tau)  \] makes $(P(\tau), \circ)$ a unital C*-algebra. 
\end{thm}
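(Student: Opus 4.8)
\subsection*{Proof proposal}

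The plan is to transport the C*-structure of $P(\theta)$ to $P(\tau)$ through $T$, so the real content is to show that $T$ restricts to an \emph{isometric linear bijection} $P(\theta)\to P(\tau)$; once that is in place the product $\circ$ and the involution are simply carried over and the C*-identity is inherited. I would first record that $P(\theta)$ is genuinely a C*-algebra under the ordinary operator product: since $\theta$ is a normal unital $*$-endomorphism, $E_\lambda(\theta)E_\mu(\theta)\subseteq E_{\lambda\mu}(\theta)$ and $E_\lambda(\theta)^*=E_{\bar\lambda}(\theta)$, while $1\in E_1(\theta)$; hence $E(\theta)$ is a unital $*$-subalgebra of $\mcal B$ and $P(\theta)=\ol{\t{span}}\,E(\theta)$ is a unital C*-subalgebra. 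Writing $p_n:=\theta^n(p)$, the two structural features of the minimal dilation --- the intertwining relation $\tau(pzp)=p\theta(z)p$ together with $\theta(p)\geq p$, and minimality --- give respectively that $T$ carries $E_\lambda(\theta)$ into $E_\lambda(\tau)$ (so $T(P(\theta))\subseteq P(\tau)$, and $T$ is normal, unital, completely positive) and that $p_n\uparrow 1$ in the strong operator topology.

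The heart of the argument is a reconstruction formula. For $z\in E_\lambda(\theta)$ one computes $\theta^n(pzp)=p_n\theta^n(z)p_n=\lambda^n p_n z p_n$, whence $p_n z p_n=\bar\lambda^n\theta^n(T(z))$; letting $n\to\infty$ and using $p_n z p_n\to z$ gives $z=\t{SOT-}\us{n\to\infty}{\t{lim}}\,\bar\lambda^n\theta^n(T(z))$, so $z$ is determined by $T(z)$. Because $\norm{\theta^n(\cdot)}\leq\norm{\cdot}$ and the norm is strong-operator lower semicontinuous, this already yields $\norm{z}=\norm{T(z)}$ on each $E_\lambda(\theta)$.

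For surjectivity I would run this in reverse. Given $x\in E_\lambda(\tau)\subseteq\mcal A=p\mcal B p\subseteq\mcal B$, set $z_n:=\bar\lambda^n\theta^n(x)$ and prove the compatibility relation $p_m z_n p_m=z_m$ for $m\leq n$, which follows from $p\,\theta^{k}(x)\,p=\tau^{k}(x)=\lambda^{k}x$. This relation is of martingale type: for $\xi$ in the dense set $\us{m}{\bigcup}\,p_m K$ the scalars $\norm{z_n\xi}$ are bounded and eventually nondecreasing, forcing $(z_n\xi)$ to be Cauchy; hence $z_n\to z$ strongly with $z\in E_\lambda(\theta)$ and $T(z)=x$. \textbf{I expect this convergence to be the main obstacle}, since it is exactly the assertion that peripheral eigenvectors of $\tau$ dilate, and it is what genuinely uses the dilation rather than formal bookkeeping.

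It remains to upgrade isometry and bijectivity from single eigenspaces to all of $P(\theta)$. The subtlety is a finite sum $z=\us{j=1}{\os{k}{\sum}}z_{\lambda_j}$, where $p_n z p_n=\theta^n\!\big(\us{j}{\sum}\bar\lambda_j^{\,n}\,pz_{\lambda_j}p\big)$ carries rotating phases $\bar\lambda_j^{\,n}$ that can a priori inflate the norm, so a naive per-eigenvalue estimate fails. The way around this is recurrence: in the compact group $\mathbbm{T}^{k}$ the cyclic group generated by $(\bar\lambda_1,\dots,\bar\lambda_k)$ returns near the identity, so there is a subsequence $(n_i)$ along which every $\bar\lambda_j^{\,n_i}\to 1$; along it $\norm{p_{n_i} z p_{n_i}}\leq\norm{\us{j}{\sum}\bar\lambda_j^{\,n_i}pz_{\lambda_j}p}\to\norm{T(z)}$, which with $\norm{z}\leq\liminf_n\norm{p_n z p_n}$ gives $\norm{z}=\norm{T(z)}$ on $E(\theta)$, and then on $P(\theta)$ by density. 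Isometry makes $T$ injective with closed range, and since $T(E(\theta))=E(\tau)$ by the surjectivity above, $T(P(\theta))=\ol{E(\tau)}=P(\tau)$. Finally I would transport the structure: $T$ intertwines adjoints, $T(z^*)=(pzp)^*=T(z)^*$, so defining $x\circ y:=T(T^{-1}(x)T^{-1}(y))$ keeps the usual involution and the usual operator norm and makes $T$ a $*$-isomorphism; thus $(P(\tau),\circ)$ is a unital C*-algebra with unit $T(1)=p$.
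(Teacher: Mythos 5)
Note first that the paper you are being compared against contains no proof of this statement: it is imported verbatim from \cite{BKT}, so the only meaningful comparison is with the argument of that reference. Judged on its own terms, your proposal is correct and is, in all essentials, the dilation-theoretic argument of \cite{BKT}: the observation that $P(\theta)$ is an honest unital C*-subalgebra because $\theta$ is a unital normal $*$-endomorphism (so $E_\lambda(\theta)E_\mu(\theta)\subseteq E_{\lambda\mu}(\theta)$ and $E_\lambda(\theta)^*=E_{\bar\lambda}(\theta)$), the reconstruction formula $z=\mathrm{SOT}\text{-}\lim_n\bar\lambda^{n}\theta^{n}(pzp)$ giving isometry of $T$ on each $E_\lambda(\theta)$, the martingale-type convergence of $z_n=\bar\lambda^{n}\theta^{n}(x)$ giving surjectivity onto $E_\lambda(\tau)$, and the recurrence of the phase vector $(\bar\lambda_1^{\,n},\dots,\bar\lambda_k^{\,n})$ in $\mathbbm{T}^{k}$ to upgrade the isometry from single eigenspaces to finite sums of eigenvectors, are exactly the ingredients used there. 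The single step you compress is the Cauchy claim in the surjectivity argument: boundedness and monotonicity of $\|z_n\xi\|$ alone do not force $(z_n\xi)$ to converge; one also needs the Pythagorean identity $\|z_{n'}\xi-z_n\xi\|^{2}=\|z_{n'}\xi\|^{2}-\|z_n\xi\|^{2}$ for $n'\geq n$ and $\xi\in p_mK$, which follows from $z_n\xi=p_nz_{n'}\xi$, i.e.\ precisely from your compatibility relation $p_nz_{n'}p_n=z_n$ --- this is evidently what you intend by ``martingale type,'' and once that line is inserted the argument is complete.
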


\begin{cor}\cite{BKT}
	If $x \in E_\lambda(\tau)$ and $y \in E_\mu (\tau)$, then, 
	\[ x \circ y = \normalfont\t{SOT}-\us{n \to \infty}{\t{lim}} (\lambda \mu)^{-n} \tau^n (xy) .  \]
\end{cor}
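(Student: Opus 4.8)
The plan is to transport everything to the dilation, where $\theta$ is a genuine $*$-endomorphism and the product on $P(\theta)$ is just operator multiplication. Write $\tilde x = T^{-1}(x)$ and $\tilde y = T^{-1}(y)$, so that by definition $x \circ y = T(\tilde x \tilde y) = p\,\tilde x \tilde y\, p$. The first step is to pin down the eigenvalues of $\tilde x$ and $\tilde y$ under $\theta$. Using the dilation identity $\tau(pzp) = p\,\theta(z)\,p$ together with $p \leq \theta(p)$, one checks the intertwining relation $\tau \circ T = T \circ \theta$ on $P(\theta)$; equivalently $\theta \circ T^{-1} = T^{-1} \circ \tau$. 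Applying this to $x \in E_\lambda(\tau)$ gives $\theta(\tilde x) = \lambda \tilde x$, and likewise $\theta(\tilde y) = \mu \tilde y$, so that $\tilde x \in E_\lambda(\theta)$ and $\tilde y \in E_\mu(\theta)$.

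Since $\theta$ is multiplicative, $\theta(\tilde x \tilde y) = \lambda\mu\, \tilde x \tilde y$ and, more importantly for the computation, $\theta^n(\tilde x) = \lambda^n \tilde x$ and $\theta^n(\tilde y) = \mu^n \tilde y$. Next I would compute $\tau^n(xy)$ directly. Because $x = p\tilde x p$ and $y = p\tilde y p$ lie in the corner $pB(K)p$, we have $xy = p\,\tilde x\, p\, \tilde y\, p$, and the dilation identity $\tau^n(\LargerCdot) = p\,\theta^n(\LargerCdot)\,p$ yields
\[ \tau^n(xy) = p\,\theta^n(p)\,\theta^n(\tilde x)\,\theta^n(p)\,\theta^n(\tilde y)\,\theta^n(p)\,p . \]
The outer factors simplify using $p\,\theta^n(p) = p = \theta^n(p)\,p$ (a consequence of $p \leq \theta(p) \leq \theta^2(p) \leq \cdots$), and pulling out the scalars gives $\tau^n(xy) = (\lambda\mu)^n\, p\,\tilde x\,\theta^n(p)\,\tilde y\, p$, that is, $(\lambda\mu)^{-n}\tau^n(xy) = p\,\tilde x\,\theta^n(p)\,\tilde y\, p$.

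Finally, one takes the SOT limit. By minimality of the dilation, $\theta^n(p) \uparrow 1$ strongly, so $\theta^n(p)\,\tilde y\, p\,\xi \to \tilde y\, p\,\xi$ for every $\xi$, and left multiplication by the fixed bounded operator $p\tilde x$ preserves this convergence; hence $(\lambda\mu)^{-n}\tau^n(xy) \to p\,\tilde x\,\tilde y\, p = x\circ y$ in the strong operator topology, as claimed. The one genuinely delicate point is the \emph{inner} projection: $xy$ is not the compression of a single $\theta$-eigenvector, precisely because of the middle $p$ in $p\,\tilde x\, p\, \tilde y\, p$. The mechanism that rescues the computation is that applying $\theta^n$ replaces this inner $p$ by $\theta^n(p)$, which increases to $1$; this is exactly what converts the obstruction into the correct boundary product in the limit, and it is where the increasing-projection structure of the minimal dilation is indispensable.
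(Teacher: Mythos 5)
Your proof is correct, and it is essentially the argument behind this corollary in the cited source \cite{BKT} (the paper itself states the result without proof): transport to the minimal dilation via $T^{-1}$, use multiplicativity of $\theta$ and the relations $p\,\theta^n(p)=\theta^n(p)\,p=p$ to get $(\lambda\mu)^{-n}\tau^n(xy)=p\,\tilde x\,\theta^n(p)\,\tilde y\,p$, and then invoke minimality ($\theta^n(p)\uparrow 1$ strongly) to pass to the SOT limit $p\,\tilde x\,\tilde y\,p = x\circ y$. Your closing observation correctly identifies the crux: the inner projection $p$ in $xy=p\,\tilde x\,p\,\tilde y\,p$ is what makes $xy$ fail to be an eigenvector, and the increasing family $\theta^n(p)$ is exactly what dissolves this obstruction in the limit.
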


\section{Peripheral Poisson boundary}\label{ppb}

Consider the pair $\left(B(\mcal F (H)), P_{\omega})\right)$, where $\omega = \left\{\omega_i : i \in \Theta \right\}$ such that $\us{i \in \Theta}{\sum}  \omega_i = 1 $ and $P_{\omega} : B(\mcal F(H)) \to B(\mcal F(H))$ is the normal ucp map defined as, $$P_{\omega}(x) = \sum_{i \in \Theta} \omega_i l_i^* x l_i \ \ \ \ \text{for all} \ \ x \in B(\mcal F(H)) . $$
In this section, we study some structural properties of the peripheral Poisson boundary $P(P_\omega)$ of $P_\omega$ and examine its behavior with the Poisson boundary $F(P_\omega)$. We begin by showing the peripheral Poisson boundary of $P_\omega$ is independent of the choice of orthonormal basis

\vspace*{2mm}

Let $\left\{ e_i : i \in \Theta \right\}$ and $\left\{ f_i : i \in \Theta \right\}$ be two orthonormal bases of $H$. Consider the corresponding ucp maps,
\[P_{\omega}(x) = \sum_{i \in \Theta} \omega_i l_{e_i}^* x l_{e_i} \ \ \t{and}  \ \ P'_{\omega}(x) = \sum_{i \in \Theta} \omega_i l_{f_i}^* x l_{f_i} \ \ \text{for all} \ \ x \in B(\mcal F(H)). \]

Our goal is to show that $P(P_\omega)$ and $P(P'_\omega)$ are isomorphic as C*-algebras

Consider the unitary $U : H \to H $  sending $e_i$ to $f_i$ for each $i \in \Theta$. The unitary $U$ induces the second quantization $\Gamma_U : \mcal F(H) \to \mcal F(H)$, which is an unitary on $\mcal F(H)$, defined by $\Gamma_U (\Omega) = \Omega$ and $\Gamma_U|_{H^{\otimes n}} = U^{\otimes n}$ for $n \in \N$. Clearly, $\Gamma_U$ induces the automorphism $\widetilde \Gamma_U$ of $B(\mcal F(H))$ given by,
\[ \widetilde \Gamma_U (x) = \Gamma_U x \Gamma_U^* \ \ \t{for} \ \ x \in B(\mcal F(H)) \]
It is easy to verify that $\widetilde \Gamma_U P_\omega = P'_\omega \widetilde \Gamma_U$. Using this identity it is easy to conclude that 
\[ P(P_\omega) \ni x \mapsto \widetilde \Gamma_U (x) \in P(P'_\omega) \] 
is a $*$-algebra isomorphism.

\subsection{Strict inclusion of Poisson boundary inside Peripheral Poisson boundary}

For each $\lambda \in \mathbbm{T} \setminus \left\{ 1 \right\}$, consider the maps, 
\[ \mcal F(H) \ni e_I \os{x_\lambda} \longmapsto \lambda^{\abs{I}} e_I \in \mcal F(H) \ \ \t{for} \ I \in \Lambda .   \]
Clearly, $x_\lambda$ is a unitary in $B(\mcal F(H))$ for each $\lambda \in \mathbbm{T} \setminus \left\{ 1 \right\}$. 

In the next lemma, we list some useful properties of $\left\{ x_\lambda : \lambda \in \mathbbm{T} \setminus \left\{ 1 \right\} \right\}$.

\begin{lem}\label{strictincl}\
	\begin{itemize}
		\item [(i)] $P_\omega (x_\lambda) = \lambda x_\lambda$, for $\lambda \in \mathbbm{T} \setminus \left\{1\right\}$.
		\item [(ii)] $x_\lambda \circ x_\mu = x_\lambda x_\mu = x_{\lambda \mu}$, for $\lambda, \mu \in \mathbbm{T} \setminus \left\{1\right\}$. 
	\end{itemize}
\end{lem}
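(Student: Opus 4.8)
The plan is to establish both statements by explicit computation, reducing everything to the diagonal action of $x_\lambda$ on the orthonormal basis $\left\{e_I : I \in \Lambda\right\}$ of $\mcal F(H)$.

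For (i), I would first prove the pointwise identity $l_i^* x_\lambda l_i = \lambda\, x_\lambda$ for each fixed $i \in \Theta$, and then average. To do so I evaluate $l_i^* x_\lambda l_i$ on an arbitrary basis vector $e_J$. Since $l_i e_J = e_{iJ}$ has length $\abs{J}+1$, we get $x_\lambda l_i e_J = \lambda^{\abs{J}+1} e_{iJ}$, and applying the annihilation formula $l_i^* e_{iJ} = e_J$ yields $l_i^* x_\lambda l_i\, e_J = \lambda^{\abs{J}+1} e_J = \lambda\, x_\lambda e_J$. As the two operators agree on the orthonormal basis, $l_i^* x_\lambda l_i = \lambda x_\lambda$ for every $i$. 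Summing with the weights $\omega_i$ and using $\us{i \in \Theta}{\sum}\omega_i = 1$ gives $P_\omega(x_\lambda) = \lambda x_\lambda$, which is (i).

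For (ii), the ordinary operator product $x_\lambda x_\mu = x_{\lambda\mu}$ is immediate: both sides act diagonally on $e_I$ and $(x_\lambda x_\mu)e_I = \lambda^{\abs{I}}\mu^{\abs{I}} e_I = (\lambda\mu)^{\abs{I}} e_I$. For the Choi--Effros product I invoke the corollary: since part (i) shows $x_\lambda \in E_\lambda(P_\omega)$ and $x_\mu \in E_\mu(P_\omega)$, we have
\[ x_\lambda \circ x_\mu = \t{SOT}-\us{n \to \infty}{\t{lim}}\, (\lambda\mu)^{-n} P_\omega^n(x_\lambda x_\mu) = \t{SOT}-\us{n \to \infty}{\t{lim}}\, (\lambda\mu)^{-n} P_\omega^n(x_{\lambda\mu}) . \]
The key observation is that $x_{\lambda\mu}$ is again a peripheral eigenvector, so $P_\omega^n$ merely rescales it and the normalizing factor cancels, making the sequence constant. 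Concretely, when $\lambda\mu \ne 1$, part (i) gives $P_\omega^n(x_{\lambda\mu}) = (\lambda\mu)^n x_{\lambda\mu}$, so every term of the sequence equals $x_{\lambda\mu}$ and the limit is $x_{\lambda\mu}$.

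The only point requiring separate care is the degenerate case $\lambda\mu = 1$, which falls outside the family $\left\{x_\nu : \nu \in \mathbbm{T}\setminus\left\{1\right\}\right\}$. Here $x_{\lambda\mu} = x_1$ is the identity operator on $\mcal F(H)$, the normalizing factor $(\lambda\mu)^{-n}$ equals $1$, and $P_\omega^n(1) = 1$ by unitality; thus the limit is again $1 = x_{\lambda\mu}$. Since no genuine convergence issue arises in either case (the sequences are constant), I do not anticipate any real obstacle beyond bookkeeping with the concatenation and length conventions for sequences in $\Lambda$.
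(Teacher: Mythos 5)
Your proof is correct and follows essentially the same route as the paper: a direct computation on the basis vectors $e_I$ for (i), and for (ii) the observation that $x_\lambda x_\mu = x_{\lambda\mu}$ is again a peripheral eigenvector, so each term of the sequence $(\lambda\mu)^{-n}P_\omega^n(x_\lambda x_\mu)$ in the Choi--Effros limit equals $x_\lambda x_\mu$. Your explicit treatment of the degenerate case $\lambda\mu = 1$ (via unitality of $P_\omega$) is a small refinement that the paper passes over silently, but it does not change the substance of the argument.
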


\begin{proof}\
	\begin{itemize}
		\item [(i)] For each $I \in \Lambda$,
			\begin{align*}
			P_\omega(x_\lambda)(e_I) &= \us{i \in \Theta}{\sum} \omega_i l_i^* x_\lambda l_i (e_I)\\ &= \us{i \in \Theta}{\sum} \omega_i l_i^* x_\lambda  (e_{iI}) \\ &= \lambda^{\abs{I}+1} \us{i \in \Theta}{\sum} \omega_i l_i^* (e_{iI}) \\ &= \lambda x_\lambda (e_I) .
		\end{align*}
		
		\item [(ii)]  For $\lambda, \mu \in \mathbbm{T} \setminus \left\{1\right\}$, using (i), we get that, $P_\omega (x_\lambda x_\mu) = \left(\lambda \mu \right) x_\lambda x_\mu $. Hence, $P_\omega^n (x_\lambda x_\mu) = \left(\lambda \mu \right)^n x_\lambda x_\mu$. Thus, for each $I \in \Lambda$, we get, \begin{align*}
			\left(x_\lambda \circ x_\mu \right) (e_I) = \us{n \to \infty}{\t{lim}} \left(\lambda \mu \right)^{-n} P_\omega^n (x_\lambda x_\mu)(e_I) = \us{n \to \infty}{\t{lim}} \left(\lambda \mu \right)^{-n} \left(\lambda \mu \right)^n (x_\lambda x_\mu)(e_I) = x_\lambda x_\mu (e_I) . 
		\end{align*}  
	\end{itemize}
\end{proof}
By \Cref{strictincl}(i) we have, for each $\lambda \in \mathbbm{T} \setminus \left\{ 1 \right\}$, $x_\lambda \in P(P_\omega) \setminus F(P_\omega)$. Hence, the inclusion of $F(P_\omega)$ inside $P(P_\omega)$ is strict. 

\subsection{Center of the Peripheral Poisson boundary}\

In the next proposition we compute several multiplication formulae in $P(P_\omega)$ which will be used later.
 
\begin{prop}\label{formulaprop}
	Let $x \in P(P_\omega)$ and let $I, J \in \Lambda^*$. Then :
	\begin{itemize}
		\item [(i)] $ x \circ r_{I} = x r_I $.
		\item [(ii)] $r_I^* \circ x = r_I^*  x $.
		\item [(iii)] $r_J^* \circ x \circ r_I = r_J^* x r_I $.
		\item [(iv)] $r_I \circ x = r_I x + \displaystyle \sum_{t=1}^{\abs{I}} \omega_{\left(I^{op}\right)_t} r_{I_{\abs{I}-t}} p_\Omega x l_{\left(I^{op}\right)_t} $. In particular, $r_i \circ x = r_i x + \omega_i p_\Omega x l_i$, $i \in \Theta$.
		\item [(v)] $ x \circ r_{I}^* = x r_I^* + \displaystyle \sum_{t=1}^{\abs{I}} \omega_{\left(I^{op}\right)_t} l_{\left(I^{op}\right)_t}^* x p_\Omega r_{I_{\abs{I}-t}}^* $. In particular, $x \circ r_i^* = x r_i^* + \omega_i l_i^* x p_\Omega$, $i \in \Theta$.
	\end{itemize}
\end{prop}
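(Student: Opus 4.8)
The plan is to reduce every identity to the case of a single peripheral eigenvector $x\in E_\mu(P_\omega)$ and then invoke the product formula from the preceding corollary, namely $x\circ y=\t{SOT-}\us{n\to\infty}{\t{lim}}(\lambda\mu)^{-n}P_\omega^n(xy)$ for $x\in E_\lambda$, $y\in E_\mu$; the statement for a general $x\in P(P_\omega)=\oline{\t{span}}\,E(P_\omega)$ then follows by norm-density together with continuity of the maps involved (with the proviso, discussed below, that in (iv)--(v) the correction terms must be read eigenspace by eigenspace). The one preparatory fact needed throughout is that every $r_I$ and every $r_I^*$ is a fixed point of $P_\omega$, i.e. lies in $E_1(P_\omega)=F(P_\omega)$. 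This is immediate from the commutation of left and right creation operators, $l_a r_I=r_I l_a$ (whence $l_a^* r_I^*=r_I^* l_a^*$), combined with $l_a^* l_a=1$ and $\sum_a\omega_a=1$. In particular each product in (i)--(v) has the shape (eigenvalue $1$)$\circ$(eigenvalue $\mu$) or its mirror image, so the normalising factor is always $\mu^{-n}$.

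Parts (i) and (ii) are then one line each. Since $r_I$ commutes with every $l_a$, one gets $P_\omega(xr_I)=P_\omega(x)r_I$, hence $P_\omega^n(xr_I)=\mu^n xr_I$ for $x\in E_\mu$, and dividing by $\mu^n$ gives $x\circ r_I=xr_I$. Dually, $r_I^*$ commutes with every $l_a^*$, so $P_\omega^n(r_I^* x)=\mu^n r_I^* x$ and $r_I^*\circ x=r_I^* x$. For (iii) I would not compute afresh but use associativity of $\circ$: writing $r_J^*\circ x\circ r_I=(r_J^*\circ x)\circ r_I=(r_J^* x)\circ r_I$ by (ii), and noting $r_J^* x\in E_\mu$ since $P_\omega(r_J^* x)=\mu r_J^* x$, part (i) applied to $r_J^* x$ yields $(r_J^* x)\circ r_I=r_J^* x r_I$.

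The heart of the proposition is (iv) (and, by adjoints, (v)), and the engine is a single commutation identity. Iterating $l_a^* r_j=r_j l_a^*+\delta_{a,j}p_\Omega$ and using $p_\Omega r_j=0$ to kill every intermediate boundary term, one checks that only the \emph{last} letter of $I$ contributes to a single pass:
\[ l_a^* r_I=r_I l_a^*+\delta_{a,\,i_{\abs{I}}}\,r_{I_{\abs{I}-1}}\,p_\Omega, \]
where $i_{\abs{I}}$ is the last entry of $I$. Inserting this into $P_\omega$ gives $P_\omega(r_I x)=\mu\,r_I x+D_1$ with $D_1=\omega_{i_{\abs{I}}}\,r_{I_{\abs{I}-1}}\,p_\Omega x\,l_{i_{\abs{I}}}$. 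The decisive observation is that $P_\omega$ acts on these boundary terms as a pure shift: because $l_a^* p_\Omega=0$ annihilates the pass-through part while the same commutator peels off the next letter, one verifies $P_\omega(D_t)=D_{t+1}$ for $t<\abs{I}$ and $P_\omega(D_{\abs{I}})=0$, where $D_t=\omega_{(I^{op})_t}\,r_{I_{\abs{I}-t}}\,p_\Omega x\,l_{(I^{op})_t}$ is exactly the $t$-th summand of the claimed formula. Summing the resulting triangular recursion gives, for $n\ge\abs{I}$,
\[ P_\omega^n(r_I x)=\mu^n r_I x+\sum_{t=1}^{\abs{I}}\mu^{\,n-t}D_t, \]
so $\mu^{-n}P_\omega^n(r_I x)$ stabilises and $r_I\circ x=r_I x+\sum_{t=1}^{\abs{I}}\mu^{-t}D_t$. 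For a fixed point $x$ (the case $\mu=1$) the eigenvalue weights disappear and this is precisely the displayed identity; part (v) is the verbatim adjoint, run from $r_I^* l_a=l_a r_I^*+\delta_{a,\,i_{\abs{I}}}\,p_\Omega r_{I_{\abs{I}-1}}^*$ together with $p_\Omega l_a=0$.

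The main obstacle is the combinatorial bookkeeping in (iv)--(v): one must check that a single pass really collapses onto the last letter (so the recursion is a genuine shift and not a branching sum over all letters of $I$), and then match the reindexing $I_{\abs{I}-t}$, $(I^{op})_t$ and the multiplicative weights $\omega_{(I^{op})_t}$ across the shift $D_t\mapsto D_{t+1}$. The subtler point, which I would flag explicitly, is the eigenvalue normalisation: the factor $\mu^{-t}$ produced by $(\lambda\mu)^{-n}$ rides along with each $D_t$, and since the $x$ inside $D_t$ is ``frozen'' (unaffected by $P_\omega$ once $p_\Omega$ sits to its left), it does not cancel for $\mu\ne 1$. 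Hence, unlike (i)--(iii), the identities (iv)--(v) should be read eigenspace by eigenspace rather than extended by mere norm-continuity in $x$, and the displayed weights $\omega_{(I^{op})_t}$ are exactly their restriction to the Poisson boundary $F(P_\omega)$.
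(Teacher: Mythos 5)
Your proposal follows the paper's own route: reduce by linearity and norm\nobreakdash-continuity of $\circ$ to a peripheral eigenvector $x \in E_\mu(P_\omega)$, note that $r_I, r_I^* \in F(P_\omega)$, prove (i)--(iii) from $l_a r_I = r_I l_a$ and the limit formula of \cite{BKT}, and get (iv)--(v) by iterating $P_\omega$ on $r_I x$ via $l_a^* r_j = r_j l_a^* + \delta_{a,j} p_\Omega$ and $l_a^* p_\Omega = p_\Omega r_j = 0$. The paper organizes (iv) as an induction on $\abs{I}$ through the single-letter case, whereas you use the one-pass identity $l_a^* r_I = r_I l_a^* + \delta_{a,\, i_{\abs{I}}} r_{I_{\abs{I}-1}} p_\Omega$ and the shift recursion $P_\omega(D_t) = D_{t+1}$, $P_\omega(D_{\abs{I}}) = 0$; these are the same computation packaged differently, and your bookkeeping of the indices and weights checks out.

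The genuine difference is the factor $\mu^{-t}$ you attach to each correction term, and on this point you are right and the printed proposition is not. In the paper's proof of (iv) (which, note, only treats eigenvalues $\lambda \neq 1$), the step ``$P_\omega^n(r_i x) = \lambda^n P_\omega(r_i x)$'' is an off-by-one slip: from $P_\omega^2(r_i x) = \lambda P_\omega(r_i x)$ one gets $P_\omega^n(r_i x) = \lambda^{n-1} P_\omega(r_i x)$, hence $r_i \circ x = \t{SOT}-\us{n \to \infty}{\t{lim}}\ \lambda^{-n} P_\omega^n (r_i x) = \lambda^{-1} P_\omega(r_i x) = r_i x + \lambda^{-1} \omega_i p_\Omega x l_i$, which is exactly your formula. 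The discrepancy is not removable by rescaling: take $x = r_i^* x_\lambda \in E_\lambda(P_\omega)$ with $\lambda \neq 1$. Then $p_\Omega x l_i = \lambda p_\Omega \neq 0$, so the printed identity would give $r_i \circ x = r_i r_i^* x_\lambda + \lambda \omega_i p_\Omega$, while a direct iteration of $P_\omega$ gives $r_i r_i^* x_\lambda + \omega_i p_\Omega$. Thus parts (iv)--(v), as stated for all $x \in P(P_\omega)$, are false; the correct statements are the eigenspace-by-eigenspace identities you derive, with weights $\lambda^{-t}\omega_{(I^{op})_t}$, and they collapse to the printed formulas precisely on $F(P_\omega)$ --- the one case the paper's proof does not treat. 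It would strengthen your write-up to record this counterexample explicitly, and to flag that the correction propagates: \Cref{formulaprop2} and \Cref{rlem} apply (iv)--(v) to arbitrary elements of $P(P_\omega)$, respectively of the relative commutant of $F(P_\omega)$, so their statements acquire the corresponding factors (e.g.\ $\lambda^{-\abs{J}}$) and their proofs, and hence the proof of \Cref{centerthm}, need matching repair.
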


\begin{proof}
	By \cite{BKT}, $P(P_\omega)$ forms a C*-algebra with respect to the generalized `Choi-Effros' product `$\circ$'. So, it is enough to prove the identities when $x$ is a peripheral eigenvector of $P_\omega$. Suppose we have  $P_\omega (x) = \lambda x$ for $\lambda \in \mathbbm{T} \setminus \left\{ 1 \right\}$.
	\begin{itemize}
		\item [(i)] It is easy to verify that $r_I l_i = l_i r_I$, for all $i \in \Theta, I \in \Lambda$. Now, \[ P_\omega (x r_I) = \us{i \in \Theta}{\sum} \omega_i l_i^* x r_I l_i = \us{i \in \Theta}{\sum} \omega_i l_i^* x l_i r_I  = P_\omega(x) r_I = \lambda x r_I . \]
		So, $x r_I \in E_\lambda (P_\omega)$. Thus, \[x \circ r_I = \t{SOT}- \us{n \to \infty}{\t{lim}} \lambda^{-n} P_\omega^n (x r_I) = x r_I \]
		
		\item [(ii)] Follows from part (i) of the proposition by taking adjoint.
		\item [(iii)] Follows from part (i) and part (ii) of the proposition.
		\item [(iv)] We follow the steps in \cite{BBDR} for the proof. We prove the result by induction on $\abs{I}$, the length of the sequence $I$. Let $\abs{I}= 1$, say, $I = i$ for some $i \in \Theta$. Then, 
		\begin{align*}
		P_\omega (r_i x) &= \us{j \in \Theta}{\sum} \omega_j l_j^* \left(r_i x \right) l_j  = \us{j \in \Theta}{\sum} \omega_j \left(r_i l_j^* + \delta_{i,j} p_\Omega \right) x l_j \\ &= r_i \left(\us{j \in \Theta}{\sum} \omega_j l_j^* x l_j \right) + \omega_i p_\Omega x l_i  \\ &= r_i P_\omega (x) + \omega_i p_\Omega x l_i = \lambda r_i x + \omega_i p_\Omega x l_i
		\end{align*}
		Therefore, \begin{align*}
			P_\omega^2 \left(r_i x\right) &= \lambda P_\omega(r_i x) + \omega_i P_\omega\left(p_\Omega x l_i \right) \\ &= \lambda P_\omega(r_i x) + \omega_i \us{j \in \Theta}{\sum} \omega_j l_j^* \left(p_\Omega x l_i \right) l_j \\ &= \lambda P_\omega \left(r_i x \right) \left(\t{Since} \  l_j^* p_\Omega = 0 \right).
		\end{align*}
		Consequently, $P_\omega^n \left(r_i x \right) = \lambda^n P_\omega \left(r_i x \right)$ for all $n \in \N$. Thus, \begin{equation}\label{rieqn}
			r_i \circ x =  \t{SOT}- \us{n \to \infty}{\t{lim}} \lambda^{-n} P_\omega^n (r_i x) = P_\omega (r_i x) = r_i x  + \omega_i p_\Omega x l_i .
		\end{equation} 
		Hence, the result is true for all $r_I$ with $\abs{I} = 1$. Suppose that the result is true for all sequences of length $m$, for some $m \in \N$. We show that the result is true for all sequences of length $m+1$. Let $I$ be a sequence of length $m+1$, say, $I = i_1 i_2 \cdots i_{m+1}$. By part (i) we get that $r_I = r_{i_1} \cdots r_{i_{m+1}} = r_{i_1} \circ \cdots \circ r_{i_{m+1}} $. Thus, $$ r_I \circ x = r_{i_1} \circ \left(r_{i_2} \circ \cdots \circ r_{i_{m+1}} \circ x \right) = r_{i_1} \circ \left(r_J \circ x \right) $$
		where $J = i_2 \cdots i_{m+1} $. Since $\abs{J} = m $, by induction hypothesis, we have $$ r_J \circ x = r_J x + \displaystyle \sum_{t=1}^{\abs{J}} \omega_{\left(J^{op}\right)_t} r_{J_{\abs{J}-t}} p_\Omega x l_{\left(J^{op}\right)_t} $$
		Now, by an application of \Cref{rieqn} we have,
		\begin{align*}
			r_I \circ x &= r_{i_1} \circ \left(r_J \circ x \right) \\
						&= r_{i_1} \left(r_J \circ x \right) + \omega_{i_1} p_\Omega \left(r_J \circ x\right) l_{i_1} \left(\t{By} \ \t{\Cref{rieqn}} \right) \\
						&= r_{i_1} \left(r_J x + \displaystyle \sum_{t=1}^{m} \omega_{\left(J^{op}\right)_t} r_{J_{m-t}} p_\Omega x l_{\left(J^{op}\right)_t}\right) + \omega_{i_1} p_\Omega \left(r_J x + \displaystyle \sum_{t=1}^{m} \omega_{\left(J^{op}\right)_t} r_{J_{m-t}} p_\Omega x l_{\left(J^{op}\right)_t}\right)l_{i_1} \\
						&= r_I x + \displaystyle \sum_{t=1}^{m} \omega_{\left(J^{op}\right)_t} r_{i_1} r_{J_{m-t}} p_\Omega x l_{\left(J^{op}\right)_t} + \omega_{i_1} \omega_{J^{op}} p_\Omega x l_{J^{op}} l_{i_1} \left(\t{Since} \ p_\Omega r_i =0 \ \t{for} \ i \in \Theta \right) \\
						&= r_I x + \displaystyle \sum_{t=1}^{m} \omega_{\left(I^{op}\right)_t} r_{I_{m+1-t}} p_\Omega x l_{\left(I^{op}\right)_t} + \omega_{I^{op}} p_\Omega x l_{I^{op}} \\
						&= r_I x + \displaystyle \sum_{t=1}^{m+1} \omega_{\left(I^{op}\right)_t} r_{I_{m+1-t}} p_\Omega x l_{\left(I^{op}\right)_t}
		\end{align*}
		Thus, the result is true for all sequences of length $m+1$. Hence, by the principle of mathematical induction, the result is true for all sequences in $\Lambda^*$.
		
		\item [(v)] Follows from part (iv) of the proposition by taking adjoint.
	\end{itemize}
\end{proof}

\comments{\begin{lem}
	Let $x$ be a positive element of $B(\mcal F(H))$ such that $x \in E_\lambda \left( P_\omega \right)$. Then $\lab x \Omega , \Omega \rab = 0$ implies $x = 0$.
\end{lem}

\begin{proof}
	It suffices to show that $\lab x e_I, e_I \rab =0 $ for all $I \in \Lambda^*$. Since $x \in E_\lambda \left(P_\omega\right)$, we have that
	$$ 0 = \lab x \Omega , \Omega \rab = \lambda^{-1} \left \langle \left(\us{i \in \Theta}{\sum} \omega_i l_i^* x l_i\right)\Omega , \Omega \right \rangle = \lambda^{-1} \us{i \in \Theta}{\sum} \omega_i \lab l_i^* x l_i \Omega, \Omega \rab = \lambda^{-1} \us{i \in \Theta}{\sum} \omega_i \lab x e_i, e_i \rab .  $$
	Now $x$ being positive in $B(\mcal F(H))$, we have $\lab x e_i , e_i \rab \geq 0$ for all $i \in \Theta$ and since $\omega_i > 0 $ for each $i \in \Theta$, it follows that $ \lab x e_i, e_i \rab = 0 $ for each $i \in \Theta$. Hence, $\lab r_i^* x r_i \Omega, \Omega \rab = 0  $  for each $i \in \Theta$. Since, $r_i \in F(P_\omega)$, by \Cref{formulaprop}(iii) and \red{[BKT, Corollary 2.7]} we have,
	$$r_i^* \circ x \circ r_i = r_i^* x r_i \in E_\lambda(P_\omega) . $$
	The similar argument as before (with $x$ replaced by $r_i^* x r_i$) yields that,
	\[ \lab r_j^* r_i^* x r_i r_j \Omega, \Omega \rab = 0 \ \ \t{or equivalently} \ \ \lab r_{ij}^* x r_{ij} \Omega , \Omega \rab = 0 \ \ \t{for all} \ \ i, j \in \Theta . \]
	Continuing in this way, we get that,
	\[ \lab r_I^* x r_I \Omega , \Omega \rab = 0 \ \ \t{for all} \ \ I \in \Lambda^* . \]
	This implies that $\lab x e_I, e_I \rab = 0 $ for all $I \in \Lambda^*$. This concludes the lemma
\end{proof}

\begin{rem}
	Suppose $x$ is a positive element of $P(P_\omega)$. By \red{[BKT, Thm. 2.3]}, $P(P_\omega)$ is a unital C*-algebra, so $x  = y^* \circ y $ for some $y \in P(P_\omega)$. Thus, $x = T(T^{-1}(y^*) T^{-1}(y))  = T((T^{-1}(y))^* T^{-1}(y))$. Since $T$ is a completely positive map, we conclude that $x$ is a positive element in $B(\mcal F(H))$. 
\end{rem}}

Consider the following vector state on $B(\mcal F(H))$, $$ x \longmapsto \lab x \Omega, \Omega \rab, x \in B(\mcal F(H)) . $$
Let $\phi$ be its restriction on $P(P_\omega)$. We collect some useful properties of $\phi$ in the next result.

\begin{prop}\label{formulaprop2}
	Let $x \in P(P_\omega)$ and $I, J \in \Lambda$. Then :
	\begin{itemize}
		\item [(i)] $\phi (x \circ r_J^*) = \omega_J \lab x \Omega, r_J \Omega \rab = \omega_J \phi(r_J^* \circ x) $.
		
		\item [(ii)] $\phi (r_J \circ x) = \omega_J \lab x r_J \Omega, \Omega = \omega_J \phi(x \circ r_J) $.
	\end{itemize}
\end{prop}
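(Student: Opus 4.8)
The plan is to reduce both identities to the explicit multiplication formulae already established in \Cref{formulaprop}, and then to exploit the single structural fact that the annihilation operators annihilate the vacuum. Throughout I write $\phi(z) = \lab z\Omega, \Omega\rab$, recall that $\omega_J = \prod_m \omega_{j_m}$ for $J = j_1\cdots j_k$ (so $\omega_{J^{op}} = \omega_J$ by commutativity of the product), and I treat part (i) in full, obtaining part (ii) by the parallel argument (equivalently, by passing to adjoints).

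For part (i) I would start from \Cref{formulaprop}(v), which gives
\[ x \circ r_J^* = x r_J^* + \sum_{t=1}^{\abs{J}} \omega_{(J^{op})_t}\, l_{(J^{op})_t}^*\, x\, p_\Omega\, r_{J_{\abs{J}-t}}^* . \]
Applying $\phi$, the leading term contributes $\lab x r_J^* \Omega, \Omega\rab = 0$ whenever $\abs{J}\geq 1$, since $r_J^*\Omega = 0$. In the sum, the $t$-th summand is paired with $\Omega$ through the factor $r_{J_{\abs{J}-t}}^*\Omega$, and this vanishes unless the truncation $J_{\abs{J}-t}$ is the empty tuple, i.e. unless $t = \abs{J}$. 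Hence only the top term survives; using $r_{()}^* = 1$, $p_\Omega\Omega = \Omega$, and $(J^{op})_{\abs{J}} = J^{op}$, it reduces to $\omega_{J^{op}}\,\lab l_{J^{op}}^* x\Omega, \Omega\rab$. Two elementary simplifications finish this equality: the symmetry $\omega_{J^{op}} = \omega_J$, and the identity $l_{J^{op}}\Omega = r_J\Omega = e_{J^{op}}$, which turns $\lab l_{J^{op}}^* x\Omega, \Omega\rab$ into $\lab x\Omega, r_J\Omega\rab$. The empty-tuple case is immediate since then $r_J^* = 1$ and $\omega_J = 1$. For the second equality in (i) I would invoke \Cref{formulaprop}(ii), namely $r_J^*\circ x = r_J^* x$ with no correction term, so that $\phi(r_J^*\circ x) = \lab r_J^* x\Omega, \Omega\rab = \lab x\Omega, r_J\Omega\rab$; multiplying by $\omega_J$ matches the middle expression.

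Part (ii) is handled symmetrically: I expand $r_J\circ x$ by \Cref{formulaprop}(iv), apply $\phi$, and again isolate the $t = \abs{J}$ summand, this time because $\lab r_{J_{\abs{J}-t}}\, w,\Omega\rab = \lab w, r_{J_{\abs{J}-t}}^*\Omega\rab$ forces $r_{J_{\abs{J}-t}}^*\Omega\neq 0$, hence $J_{\abs{J}-t}=()$. Using $\lab p_\Omega w, \Omega\rab = \lab w, \Omega\rab$ and $l_{J^{op}}\Omega = r_J\Omega$ yields $\phi(r_J\circ x) = \omega_J\lab x r_J\Omega, \Omega\rab$, and the last equality uses \Cref{formulaprop}(i), namely $x\circ r_J = x r_J$, so that $\phi(x\circ r_J) = \lab x r_J\Omega,\Omega\rab$. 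The only genuinely delicate point in either part is the index bookkeeping that isolates the surviving summand: one must verify that for every $1\le t < \abs{J}$ the truncation $J_{\abs{J}-t}$ is nonempty, so the corresponding factor kills the vacuum, and that at $t = \abs{J}$ it is exactly the empty tuple. Everything else is a direct computation with the relations $r_i^*\Omega = 0$, $l_i^*\Omega = 0$, and $p_\Omega\Omega = \Omega$.
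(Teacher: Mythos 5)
Your proposal is correct and follows essentially the same route as the paper: expand $x \circ r_J^*$ via \Cref{formulaprop}(v), observe that $r_J^*\Omega = 0$ kills the leading term and all summands except $t = \abs{J}$, then use $\omega_{J^{op}} = \omega_J$ and $l_{J^{op}}\Omega = r_J\Omega$ together with \Cref{formulaprop}(ii) to finish part (i). The paper obtains part (ii) by taking adjoints in part (i), which is the same as the adjoint alternative you mention alongside your direct symmetric computation.
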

\begin{proof}
	\begin{itemize}
		\item [(i)] If $J=()$, is the empty tuple, we have nothing to prove. Suppose $J \in \Theta^k$ for some $k \in \N$, let $J = j_{i_1} j_{i_2} \cdots j_{i_k}$. Since $r_i^* (\Omega) = 0$ for all $i \in \Theta$, using \Cref{formulaprop}(v) we get that,
		\[ \left(x \circ r_J^* \right)\Omega = \left(x r_J^*\right)\Omega + \sum_{t=1}^{k} \omega_{J^{op}_t} (l_{J^{op}_t}^* x p_\Omega r_{I_{k-t}}^*) \Omega = \omega_J (l_{J^{op}}^* x)\Omega  \]
		Therefore,
		\[ \phi(x \circ r_J^*) = \lab (x \circ r_J^*)\Omega , \Omega \rab = \omega_J \lab (l_{J^{op}}^* x)\Omega, \Omega \rab = \omega_J \lab x \Omega, l_{J^{op}} \Omega \rab . \]
		Since, $l_{J^{op}}\Omega = r_J \Omega$, it follows that
		\[\phi(x \circ r_J^*) = \omega_J \lab x \Omega, r_J \Omega \rab = \omega_J \lab (r_J^* x)\Omega, \Omega \rab = \omega_J \lab (r_J^* \circ x)\Omega, \Omega \rab = \omega_J \phi(r_J^* \circ x) .  \]
		The third equality follows from \Cref{formulaprop}(ii).
		
		\item [(ii)] Follows from part (i) by taking adjoints.
	\end{itemize}
\end{proof}

\begin{lem}\label{rlem}
	Let $x$ be an element in $P(P_\omega)$ which commutes with every element of $F(P_\omega)$. Then $\lab x r_J \Omega, \Omega \rab = 0 $ for all $J \in \Lambda^*$. Further, if $I, J \in \Lambda $ are of same length, then $r_I^* x r_J = \delta_{I,J} x$.
\end{lem}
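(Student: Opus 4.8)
The plan is to reduce everything to a single commutation identity. The first step is to observe that each right creation operator $r_J$, $J \in \Lambda$, is a fixed point of $P_\omega$: since $l_i r_J = r_J l_i$ and $l_i^* l_i = 1$, one has $P_\omega(r_J) = \us{i \in \Theta}{\sum} \omega_i l_i^* r_J l_i = \us{i \in \Theta}{\sum} \omega_i r_J = r_J$, so $r_J \in F(P_\omega)$. Hence the hypothesis on $x$ applies to every $r_J$, giving $x \circ r_J = r_J \circ x$; this one family of identities drives both claims.

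For the first claim I would apply the state $\phi$ to $x \circ r_J = r_J \circ x$. By \Cref{formulaprop2}(ii) we have $\phi(r_J \circ x) = \omega_J \phi(x \circ r_J)$, so commutativity forces $\phi(x \circ r_J) = \omega_J \phi(x \circ r_J)$, i.e. $(1 - \omega_J)\phi(x \circ r_J) = 0$. Since $\Theta$ has at least two elements and $\us{i \in \Theta}{\sum}\omega_i = 1$ with all $\omega_i > 0$, each $\omega_i$ is strictly less than $1$; therefore $\omega_J = \omega_{j_1}\cdots\omega_{j_k} < 1$ for every $J \in \Lambda^*$, and we conclude $\phi(x \circ r_J) = 0$. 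As $x \circ r_J = x r_J$ by \Cref{formulaprop}(i), this is precisely $\lab x r_J \Omega, \Omega \rab = 0$.

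For the second claim I would first extract the length-one case of the commutation identity. Combining $x \circ r_j = r_j \circ x$ with \Cref{formulaprop}(i) and (iv) yields the operator identity $x r_j = r_j x + \omega_j p_\Omega x l_j$ in $B(\mcal F(H))$. Multiplying on the left by $r_i^*$ and using $r_i^* r_j = \delta_{i,j}$ and $r_i^* p_\Omega = 0$ from the preliminary lemma immediately gives $r_i^* x r_j = \delta_{i,j} x$. The general case then follows by induction on the common length, the length-zero case being trivial: writing $I = i_1 I''$ and $J = j_1 J''$ with $\abs{I''} = \abs{J''}$ one shorter, factor $r_I^* x r_J = r_{I''}^* (r_{i_1}^* x r_{j_1}) r_{J''}$; the length-one case replaces the inner bracket by $\delta_{i_1, j_1} x$, and the induction hypothesis then gives $\delta_{i_1, j_1}\, r_{I''}^* x r_{J''} = \delta_{i_1, j_1} \delta_{I'', J''} x = \delta_{I, J} x$.

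The computations are short, so the essential content lies in the setup rather than in any hard estimate. The one genuine idea is the opening observation that $r_J \in F(P_\omega)$, which is what makes the commutativity hypothesis usable; after that, the work is simply translating the abstract $\circ$-commutation into operator identities via \Cref{formulaprop} and \Cref{formulaprop2}, after which the annihilation relations $r_i^* r_j = \delta_{i,j}$ and $r_i^* p_\Omega = 0$ finish the job and the induction is routine bookkeeping.
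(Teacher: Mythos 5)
Your proof is correct. For the first claim you follow the paper's argument essentially verbatim: both apply $\phi$ to the commutation relation $x \circ r_J = r_J \circ x$, invoke \Cref{formulaprop2}(ii) together with \Cref{formulaprop}(i), and conclude from $\omega_J \neq 1$; your only addition is to verify explicitly that $r_J \in F(P_\omega)$, a fact the paper uses tacitly. For the second claim your route genuinely differs. The paper argues entirely inside the C*-algebra $(P(P_\omega), \circ)$: by \Cref{formulaprop}(iii) one has $r_I^* x r_J = r_I^* \circ x \circ r_J$, then $x$ is moved past $r_I^*$ (which lies in $F(P_\omega)$, the fixed-point set being self-adjoint), and the computation is finished with \Cref{formulaprop}(ii) and $r_I^* \circ r_J = r_I^* r_J = \delta_{I,J}$ --- no induction, all lengths handled at once. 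You instead combine \Cref{formulaprop}(i) and (iv) to turn the $\circ$-commutation with the single generators $r_j$ into the honest operator identity $x r_j = r_j x + \omega_j p_\Omega x l_j$ in $B(\mcal F(H))$, left-multiply by $r_i^*$ to kill the correction term (using $r_i^* p_\Omega = 0$ and $r_i^* r_j = \delta_{i,j}$), and then induct on the common length, which is routine since at that point everything is ordinary operator multiplication. Both arguments are valid: the paper's is shorter and induction-free, while yours needs commutation only with the length-one elements $r_j$ and never has to observe that the adjoints $r_I^*$ are themselves fixed points of $P_\omega$.
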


\begin{proof}
	For $J \in \Lambda^*$, we have,  \begin{equation}\label{eqn1}
		\lab x r_J \Omega, \Omega \rab = \phi (x \circ r_J) = \phi (r_J \circ x) = \omega_J \lab x r_J \Omega, \Omega \rab
	\end{equation} 
	We have the second equality because $x$ commute with elements of $F(P_\omega)$  and the last equality follows from \Cref{formulaprop2}(ii). Now since, $\abs{J} \geq 1$, we have that $\omega_J \neq 1$. Thus, it follows from \Cref{eqn1} that $\lab x r_J \Omega, \Omega \rab = 0 $.
	
	Further, if $I, J \in \Lambda$ are of same length, then we have,
	\begin{align*}
		r_I^* x r_J &= r_I^* \circ x \circ r_J \ \ \t{(By \Cref{formulaprop}(iii))} 	\\
		&= x \circ r_I^* \circ r_J \ \ \t{(Since $x$ is in center of $P(P_\omega)$)} \\
		&= x \circ r_I^* r_J \ \ \t{(By \Cref{formulaprop}(ii))} \\
		&= \delta_{I,J} x
	\end{align*} 
\end{proof} 

\begin{thm}\label{centerthm}
	The relative commutant of the inclusion $F(P_\omega) \subseteq P(P_\omega)$ is trivial.
\end{thm}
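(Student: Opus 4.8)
The plan is to prove that any $x \in P(P_\omega)$ lying in the relative commutant of $F(P_\omega)$ is a scalar multiple of the identity, by computing all of its matrix coefficients $\lab x e_M, e_N \rab$, $M, N \in \Lambda$, in the orthonormal basis $\mcal B$. Since $r_J \Omega = e_{J^{op}}$, the family $\left\{ r_J \Omega : J \in \Lambda \right\}$ exhausts $\mcal B$, so \Cref{rlem} already determines the coefficients with $\abs{M} = \abs{N}$: writing $e_M = r_{M^{op}}\Omega$, $e_N = r_{N^{op}}\Omega$ and applying $r_{N^{op}}^* x r_{M^{op}} = \delta_{M,N}\, x$ gives $\lab x e_M, e_N\rab = \delta_{M,N}\, c$, where $c := \phi(x) = \lab x\Omega,\Omega\rab$. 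The genuine content is therefore the coefficients with $\abs{M} \neq \abs{N}$, which \Cref{rlem} does not constrain directly.

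The key step is to upgrade the $\circ$-commutation of $x$ with creation operators to an honest operator identity in $B(\mcal F(H))$. Each $r_j$ lies in $F(P_\omega)$ (one checks $P_\omega(r_j) = r_j$), so $x \circ r_j = r_j \circ x$. Evaluating the two sides by \Cref{formulaprop}(i) and (iv) yields $x r_j = r_j x + \omega_j p_\Omega x l_j$. I then show the correction term vanishes: for each $K$ one has $p_\Omega x l_j e_K = \lab x e_{jK}, \Omega\rab \Omega$, and $\lab x e_{jK}, \Omega\rab = 0$ by the vacuum statement of \Cref{rlem}, since $jK$ is a nonempty word. Hence $p_\Omega x l_j = 0$ and
\[ x r_j = r_j x \qquad \t{for all } j \in \Theta . \]

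With this commutation in hand I peel off letters one at a time. From $x r_j = r_j x$ one gets $\lab x e_{Kj}, e_N\rab = \lab x e_K, r_j^* e_N\rab$ for all $K, N$; as $r_j^* e_N$ equals $e_{N'}$ when $N = N' j$ and $0$ otherwise, this recursion reduces $(\abs{M}, \abs{N})$ to $(\abs{M}-1, \abs{N}-1)$ by matching last letters, and whenever $\abs{M} > \abs{N}$ it terminates at a coefficient $\lab x e_{M''}, \Omega\rab$ with $M''$ nonempty, which is $0$ by \Cref{rlem}. Thus $\lab x e_M, e_N\rab = 0$ when $\abs{M} > \abs{N}$. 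For $\abs{M} < \abs{N}$ I invoke symmetry: the relative commutant is $*$-closed for the ordinary adjoint, which is the involution of the C*-algebra $(P(P_\omega),\circ)$ (it restricts to the Choi--Effros involution on $F(P_\omega)$ and sends $E_\lambda$ to $E_{\bar\lambda}$); taking $\circ$-adjoints in $x \circ a = a \circ x$ and using $a^* \in F(P_\omega)$ shows $x^*$ again lies in the relative commutant. Applying the previous case to $x^*$ gives $\lab x e_M, e_N\rab = \oline{\lab x^* e_N, e_M\rab} = 0$ since $\abs{N} > \abs{M}$. Combining the three cases yields $\lab x e_M, e_N\rab = c\,\delta_{M,N}$ for all $M, N$, i.e. $x = c\,\mathbf{1}$, so the relative commutant is trivial.

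The main obstacle I anticipate is exactly the length-unequal coefficients: \Cref{rlem} is intrinsically symmetric and constrains only equal-length matrix entries, so the crux is breaking this symmetry. The passage from the abstract $\circ$-commutation to the operator identity $x r_j = r_j x$, via the vanishing of $\omega_j p_\Omega x l_j$, is the linchpin that makes the peeling recursion run, and the $*$-symmetry of the relative commutant is what covers the complementary case $\abs{M} < \abs{N}$. Verifying that the correction term really vanishes as an operator, rather than in some weaker sense, is the delicate point on which the whole argument turns.
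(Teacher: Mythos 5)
Your proof is correct, and it shares the paper's overall skeleton --- reduce everything to the matrix coefficients $\lab x e_M, e_N \rab$ and invoke \Cref{rlem} --- but the mechanism you use for the unequal-length coefficients is genuinely different from the paper's. The paper, for $\abs{I} > \abs{J}$, splits $I^{op} = I' I''$ with $\abs{I'} = \abs{J}$ and applies the two parts of \Cref{rlem} in succession: $r_{J^{op}}^* x r_{I'} = \delta_{J^{op}, I'}\, x$ collapses the coefficient to $\delta_{J^{op}, I'} \lab x r_{I''} \Omega, \Omega \rab$, which vanishes by the vacuum statement. You instead promote the $\circ$-commutation to the honest operator identity $x r_j = r_j x$ in $B(\mcal F(H))$, by combining \Cref{formulaprop}(i) and (iv) with the vanishing of $p_\Omega x l_j$ (itself a consequence of the vacuum statement of \Cref{rlem}), and then run a letter-peeling recursion. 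This intermediate identity is nowhere in the paper and is a sharper structural fact: elements of the relative commutant commute with all right creation operators as operators, not merely in the $\circ$-sense; from it, even the equal-length identity $r_I^* x r_J = \delta_{I,J} x$ drops out immediately since $r_I^* r_J = \delta_{I,J}$. Your argument also makes explicit a point the paper glosses over: the paper treats $\abs{I} < \abs{J}$ by ``without loss of generality'', which tacitly requires that the relative commutant be closed under the ordinary adjoint; you supply exactly that justification (the C*-involution on $(P(P_\omega), \circ)$ is the operator adjoint, $F(P_\omega)$ is $*$-closed, hence so is the relative commutant). The trade-off is that your route is slightly longer, while the paper's is more economical but leaves both the operator-level commutation and the adjoint-symmetry step implicit.
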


\begin{proof}
It suffices to show that given any element $x \in P(P_\omega)$ which commutes with every element of $F(P_\omega)$ satisfy $\lab x e_I, e_J \rab  = \delta_{I,J} \lab x \Omega, \Omega \rab $ for all $I, J \in \Lambda $. Let $x$ be an element in the relative commutant and let $I, J \in \Lambda$, say, $I = i_1 i_2 \cdots i_k$ and $J = j_1 j_2 \cdots j_m $

\ul{\textit{Case (i)}}: Suppose $I$ and $J$ are sequences of different lengths. Without loss of generality we assume $\abs{I} > \abs{J} $. Then $I^{op} = i_k \cdots i_1$. Let $I' = i_1 \cdots i_{k-m+1} $ and $I'' = i_{k-m} \cdots i_1$, so that $I^{op} = I' I''$. Since $k > m $, $\abs{I''} \geq 1 $, and hence, by \Cref{rlem}, we get $\lab x r{I''} \Omega, \Omega \rab = 0 $. Thus,
\begin{align*}
	\lab x e_I, e_J \rab &= \lab (x r_{I^{op}})\Omega, r_{J^{op}} \Omega \rab \\
						&= \lab (r_{J^{op}}^* x r_{I^{op}}) \Omega, \Omega \rab \\
						&= \lab (r_{J^{op}}^* x r_{I'} r_{I''}) \Omega, \Omega \rab \ \ \t{(Since $ I^{op} = I' I'' $ )} \\
						&= \delta_{J^{op}, I'} \lab x r_{I''} \Omega, \Omega \rab \ \ \t{(Since $\abs{I'} = \abs{J} = m$, by \Cref{rlem}, $ r_{J^{op}}^* x r_{I'} = \delta_{J^{op}, I'} x $ )} \\
						&= 0
\end{align*}

\ul{\textit{Case (ii)}} : Suppose $I$ and $ J $ are of same length. An application of \Cref{rlem} reveals that,
\[ \lab x e_I, e_J \rab = \lab (r_{J^{op}}^* x r_{I^{op}}) \Omega, \Omega \rab = \delta_{I^{op, J^{op}}} \lab x \Omega, \Omega \rab = \delta_{I,J} \lab x \Omega, \Omega \rab \]
This completes the proof.
\end{proof}

\begin{cor}
	The center of $P(P_\omega)$ is trivial.
\end{cor}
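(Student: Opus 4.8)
The plan is to obtain this as an immediate consequence of \Cref{centerthm}, with no additional computation required. The substantive content already lives in that theorem; the only thing to observe here is a soft containment relating the center of $P(P_\omega)$ to the relative commutant of $F(P_\omega)$ inside $P(P_\omega)$. So my strategy is simply to reduce the statement about the center to the statement about the relative commutant.

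Concretely, I would argue as follows. Recall that the center $Z(P(P_\omega))$ of the C*-algebra $(P(P_\omega), \circ)$ is by definition the set of all $x \in P(P_\omega)$ satisfying $x \circ y = y \circ x$ for every $y \in P(P_\omega)$. Earlier in this section it was established that $F(P_\omega)$ sits inside $P(P_\omega)$ as a (genuine, in fact strict) C*-subalgebra. Hence if $x \in Z(P(P_\omega))$, then in particular $x \circ y = y \circ x$ holds for every $y$ in the subalgebra $F(P_\omega)$, which is exactly the condition for $x$ to lie in the relative commutant $F(P_\omega)' \cap P(P_\omega)$. This yields the containment
\[ Z(P(P_\omega)) \subseteq F(P_\omega)' \cap P(P_\omega). \]

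To finish, I would invoke \Cref{centerthm}, which asserts that the relative commutant of the inclusion $F(P_\omega) \subseteq P(P_\omega)$ is trivial, i.e. equals $\C 1$. Combined with the containment above, this forces $Z(P(P_\omega)) \subseteq \C 1$; since the scalars are central in any unital C*-algebra, the reverse inclusion is automatic, and therefore $Z(P(P_\omega)) = \C 1$ is trivial. I do not anticipate any genuine obstacle in this corollary: all of the real work — the multiplication formulae of \Cref{formulaprop}, the vacuum-state identities of \Cref{formulaprop2}, and the vanishing arguments of \Cref{rlem} feeding into \Cref{centerthm} — has already been carried out. The only point to be careful about is that the commutation is taken with respect to the modified Choi--Effros product $\circ$ throughout, and that $F(P_\omega)$ is used as a $\circ$-subalgebra, so that membership in the center really does entail membership in the relative commutant.
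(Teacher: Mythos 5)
Your proposal is correct and is exactly the argument the paper intends: the paper's proof simply states that the corollary ``follows easily from \Cref{centerthm}'', and your reduction --- center $\subseteq$ relative commutant $F(P_\omega)' \cap P(P_\omega) = \C 1$ --- is the easy deduction being invoked. No discrepancy to report.
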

\begin{proof}
	Follows easily from \Cref{centerthm}.
\end{proof}

\subsection{Product in the Peripheral Poisson boundary }\ 	 

We aim to provide a description of the modified `Choi-Effros' product in the peripheral Poisson boundary $P(P_\omega)$. The following lemma will be useful for our purposes.
 
\begin{lem}\label{productlem}
	Let $a \in F(P_\omega)$. Then $a \circ x_\lambda = a x_\lambda$.
\end{lem}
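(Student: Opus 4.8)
The plan is to show that $a x_\lambda$ is itself a $\lambda$-eigenvector of $P_\omega$, so that the SOT-limit defining the modified product collapses at once. Since $a \in F(P_\omega) = E_1(P_\omega)$ and, by \Cref{strictincl}(i), $x_\lambda \in E_\lambda(P_\omega)$, the Corollary to the main theorem of \cite{BKT} gives $a \circ x_\lambda = \t{SOT}-\us{n \to \infty}{\t{lim}} (1 \cdot \lambda)^{-n} P_\omega^n(a x_\lambda) = \t{SOT}-\us{n \to \infty}{\t{lim}} \lambda^{-n} P_\omega^n(a x_\lambda)$. Hence it suffices to prove that $P_\omega(a x_\lambda) = \lambda\, a x_\lambda$; once this is known, $P_\omega^n(a x_\lambda) = \lambda^n a x_\lambda$ for every $n \in \N$, and the limit equals $a x_\lambda$, which is exactly the claim.

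The key step is the commutation relation $x_\lambda l_i = \lambda\, l_i x_\lambda$ for each $i \in \Theta$, which I would verify directly on the orthonormal basis $\left\{ e_I : I \in \Lambda \right\}$ of $\mcal F(H)$. Indeed, $x_\lambda l_i (e_I) = x_\lambda (e_{iI}) = \lambda^{\abs{iI}} e_{iI} = \lambda^{\abs{I}+1} e_{iI}$, whereas $\lambda\, l_i x_\lambda (e_I) = \lambda\, l_i (\lambda^{\abs{I}} e_I) = \lambda^{\abs{I}+1} e_{iI}$, so the two operators agree. Armed with this, I would compute $P_\omega(a x_\lambda) = \us{i \in \Theta}{\sum} \omega_i l_i^* a x_\lambda l_i = \us{i \in \Theta}{\sum} \omega_i l_i^* a (\lambda\, l_i x_\lambda) = \lambda \left( \us{i \in \Theta}{\sum} \omega_i l_i^* a l_i \right) x_\lambda = \lambda\, P_\omega(a)\, x_\lambda = \lambda\, a x_\lambda$, where the last equality uses $P_\omega(a) = a$ because $a \in F(P_\omega)$. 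This yields the desired eigenvalue identity and completes the argument.

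There is essentially no serious obstacle here; the whole proof rests on the single commutation relation, and the only thing to get right is its exact form. One must pull the scalar $\lambda$ out on the correct side, using $x_\lambda l_i = \lambda\, l_i x_\lambda$ so that the factor $\lambda$ passes the fixed point $a$ and then factors out of the weighted sum without disturbing the identity $P_\omega(a) = a$. Everything else — the eigenvector bookkeeping and the collapse of the SOT-limit — is routine.
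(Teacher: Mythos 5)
Your proof is correct and takes essentially the same approach as the paper: both establish the eigenvalue identity $P_\omega(a x_\lambda) = \lambda\, a x_\lambda$ (using $P_\omega(a) = a$) and then conclude that the SOT-limit $\lambda^{-n} P_\omega^n(a x_\lambda)$ collapses to $a x_\lambda$. The only difference is organizational — you extract the commutation relation $x_\lambda l_i = \lambda\, l_i x_\lambda$ as a standalone operator identity, whereas the paper performs the identical computation directly on the basis vectors $e_I$, $I \in \Lambda$.
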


\begin{proof}
 We have $ a \circ x_\lambda  = \t{SOT}-\us{n \to \infty}{\t{lim}} \lambda^{-n} P_\omega^n (a x_\lambda) $. For each $I \in \Lambda$, we get,
 \begin{align*}
 	P_\omega (a x_\lambda)(e_I) &= \us{i \in \Theta}{\sum} \omega_i l_i^* a x_\lambda l_i (e_I) \\ 
 	&= \us{i \in \Theta}{\sum} \omega_i l_i^* a x_\lambda (e_{iI}) \\
 	&=  \lambda^{\abs{I}+1} \us{i \in \Theta}{\sum} \omega_i l_i^* a (e_{iI}) \\
 	&=  \lambda^{\abs{I}+1} \us{i \in \Theta}{\sum} \omega_i l_i^* a l_i (e_I) \\
 	&= \lambda^{\abs{I}+1} a (e_I) \ \ \t{(Since $a \in F(P_\omega)$)} \\
 	&= \lambda a x_\lambda (e_I)
 \end{align*}
 Hence, $P_\omega (a x_\lambda) = \lambda a x_\lambda$. Similarly, we get $P_\omega^n (a x_\lambda) = \lambda^n a x_\lambda$. Thus, $a \circ x_\lambda = a x_\lambda $.
\end{proof}

Given a ucp map $P : M \to M$, recall that multiplicative domain of $ P $ consist of those elements of $ M $ such that $P(m^* m) = P(m)^* P(m)$ and $P(m m^*) = P(m) P(m)^*$. If $ m $ lies in the multiplicative domain of $P$ then, $P( m a) = P(m) P(a)$ and $P( b m ) = P(b) P(m)$ for all $a, b \in M$.

\begin{prop}
	Let $\lambda \in \mathbbm{T} \setminus \left\{1 \right\}$. Then,
	\[ E_\lambda (P_{\omega}) = \left\{ax_\lambda : a \in F(P_\omega)\right\} =\left\{x_\lambda a : a \in F(P_\omega)\right\} . \]
\end{prop}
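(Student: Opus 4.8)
The plan is to establish the statement by showing both inclusions between $E_\lambda(P_\omega)$ and the set $\{a x_\lambda : a \in F(P_\omega)\}$, and then argue the second equality by a parallel computation. The key observation is that $x_\lambda$ is a unitary satisfying $P_\omega(x_\lambda) = \lambda x_\lambda$ (by \Cref{strictincl}(i)), so multiplication by $x_\lambda$ should convert between the fixed-point space $F(P_\omega)$ and the $\lambda$-eigenspace $E_\lambda(P_\omega)$. I would first verify the easy containment $\{a x_\lambda : a \in F(P_\omega)\} \subseteq E_\lambda(P_\omega)$: for $a \in F(P_\omega)$, \Cref{productlem} gives $a \circ x_\lambda = a x_\lambda$, and since the $\circ$-product of an eigenvector in $E_1$ with one in $E_\lambda$ lands in $E_\lambda$ (the eigenvalues multiply), we conclude $a x_\lambda \in E_\lambda(P_\omega)$. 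Alternatively, and more directly, one computes $P_\omega(a x_\lambda) = \lambda a x_\lambda$ exactly as in the proof of \Cref{productlem}.

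For the reverse containment $E_\lambda(P_\omega) \subseteq \{a x_\lambda : a \in F(P_\omega)\}$, I would take $y \in E_\lambda(P_\omega)$ and form the candidate $a \coloneqq y x_\lambda^*$, using that $x_\lambda$ is unitary so $x_\lambda^* = x_\lambda^{-1} = x_{\bar\lambda}$. The goal is to show $a \in F(P_\omega)$, i.e. $P_\omega(y x_\lambda^*) = y x_\lambda^*$. The natural route is the multiplicative domain: the paragraph preceding the proposition recalls that if $m$ lies in the multiplicative domain of $P_\omega$, then $P_\omega(m b) = P_\omega(m) P_\omega(b)$ and $P_\omega(b m) = P_\omega(b) P_\omega(m)$. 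Since $x_\lambda^*$ is a unitary and $P_\omega$ is unital, one checks $P_\omega(x_\lambda^* x_\lambda) = P_\omega(1) = 1 = (\bar\lambda x_\lambda^*)(\lambda x_\lambda) = P_\omega(x_\lambda^*)P_\omega(x_\lambda)$, and similarly for $x_\lambda x_\lambda^*$, so $x_\lambda$ (and hence $x_\lambda^*$) lies in the multiplicative domain. Then
\[
P_\omega(y x_\lambda^*) = P_\omega(y)\,P_\omega(x_\lambda^*) = (\lambda y)(\bar\lambda x_\lambda^*) = y x_\lambda^*,
\]
so $a = y x_\lambda^* \in F(P_\omega)$ and $y = a x_\lambda$, giving the containment. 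The identical argument with left/right multiplication reversed — setting $a \coloneqq x_\lambda^* y$ and using $P_\omega(x_\lambda^* y) = P_\omega(x_\lambda^*) P_\omega(y)$ — yields $E_\lambda(P_\omega) = \{x_\lambda a : a \in F(P_\omega)\}$.

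The step I expect to require the most care is verifying that $x_\lambda$ genuinely lies in the multiplicative domain of $P_\omega$, since this is what licenses pulling $P_\omega$ through the product $y x_\lambda^*$. Concretely one must confirm $P_\omega(x_\lambda^* x_\lambda) = P_\omega(x_\lambda^*) P_\omega(x_\lambda)$ on the nose (not merely up to the eigenvalue phase); the computation is short because $x_\lambda^* x_\lambda = 1$ and $P_\omega$ is unital, but it is the linchpin. An honest alternative, avoiding the multiplicative-domain machinery entirely, would be to compute $P_\omega(y x_\lambda^*)$ directly on basis vectors $e_I$ much as in \Cref{productlem}: since $x_\lambda^* l_i = \bar\lambda l_i x_\lambda^*$ as operators (because $x_\lambda$ shifts the grading by one when a creation operator is applied), one extracts the factor $\bar\lambda$ which cancels the $\lambda$ from $y \in E_\lambda(P_\omega)$, leaving $y x_\lambda^*$ fixed. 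I would present the multiplicative-domain argument as the main proof since it is cleanest and reuses the fact already recalled in the text.
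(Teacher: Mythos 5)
Your proof is correct and takes essentially the same route as the paper: the linchpin in both is checking that $x_\lambda$ lies in the multiplicative domain of $P_\omega$ (using $x_\lambda^* x_\lambda = x_\lambda x_\lambda^* = 1$, unitality, and $P_\omega(x_\lambda) = \lambda x_\lambda$) and then pulling $P_\omega$ through products such as $y x_\lambda^*$ or $x_\lambda^* y$ to land back in $F(P_\omega)$. The only difference is minor: the paper disposes of the first equality by citing \Cref{productlem} together with \cite[Corollary 2.10]{BKT} and reserves the multiplicative-domain argument for the second, whereas you run the multiplicative-domain argument uniformly for both equalities, which is slightly more self-contained.
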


\begin{proof}
	The first equality follows easily from \Cref{productlem} and \cite[Corollary 2.10]{BKT}. To establish the second equality, we first show that $x_\lambda$ lies in the multiplicative domain of $P_\omega$. Observe that, 
	\[P_\omega(x_\lambda)^* P_\omega(x_\lambda) = \ol{\lambda} \lambda x_\lambda^* x_\lambda = 1 = P_\omega (x_\lambda^* x_\lambda) \]
	Similarly, we get $P_\omega (x_\lambda) P_\omega (x_\lambda)^* = P_\omega (x_\lambda x_\lambda^*)$. Therefore, $x_\lambda$ lies in the multiplicative domain of $P_\omega$. Let $a \in F(P_\omega)$. Then, \begin{align*}
		P_\omega (x_\lambda a) &= P_\omega(x_\lambda) P_\omega (a) \ \ \t{(Since $x_\lambda$ is in multiplicative domain of $P_\omega$)} \\ 
		&= \lambda x_\lambda a . 
	\end{align*}
	So, $\left\{x_\lambda a : a \in F(P_\omega)\right\} \subseteq E_\lambda(P_\omega)	$.
	
	Conversely, let $T \in E_\lambda (P_\omega)$. Let $a = x_\lambda^* T$. To show that $a \in F(P_\omega)$. Now,
	\[ P_\omega(a) = P_\omega (x_\lambda^* T) = P_\omega (x_\lambda)^* P_\omega (T) = \ol{\lambda} \lambda x_\lambda^* T = a  . \]
	This concludes the proposition.
\end{proof}

The following lemma will be useful to describe the product of two elements of $E (P_\omega)$ in the peripheral Poisson boundary.

\begin{lem}
	For $a , b \in F(P_\omega)$ and $\lambda, \mu \in \mathbbm{T} \setminus \left\{1 \right\} $, we have, $$ P_\omega((x_\lambda a)(b x_\mu) ) = (\lambda \mu) x_\lambda P_\omega (ab) x_\mu . $$
\end{lem}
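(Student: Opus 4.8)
The plan is to reduce the whole identity to the multiplicative-domain factorization of $P_\omega$ recorded just above the statement, since the preceding proposition already established that each $x_\nu$ (for $\nu \in \mathbbm{T} \setminus \left\{1\right\}$) lies in the multiplicative domain of $P_\omega$; in particular both $x_\lambda$ and $x_\mu$ do. First I would rewrite the left-hand side purely in terms of ordinary operator composition: since $a, b \in B(\mcal F(H))$, we have $(x_\lambda a)(b x_\mu) = x_\lambda (ab) x_\mu$, so the claim becomes $P_\omega(x_\lambda (ab) x_\mu) = (\lambda \mu)\, x_\lambda P_\omega(ab) x_\mu$.

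Next I would invoke the multiplicative domain twice. Because $x_\lambda$ lies in the multiplicative domain of $P_\omega$, applying $P_\omega(mc) = P_\omega(m)P_\omega(c)$ with $m = x_\lambda$ and $c = (ab)x_\mu$ gives
\[ P_\omega(x_\lambda (ab) x_\mu) = P_\omega(x_\lambda)\, P_\omega((ab) x_\mu) = \lambda\, x_\lambda\, P_\omega((ab) x_\mu), \]
using $P_\omega(x_\lambda) = \lambda x_\lambda$ from \Cref{strictincl}(i). Then, since $x_\mu$ also lies in the multiplicative domain, the companion factorization $P_\omega(c' m') = P_\omega(c') P_\omega(m')$ with $c' = ab$ and $m' = x_\mu$ yields $P_\omega((ab) x_\mu) = P_\omega(ab)\,\mu x_\mu$. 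Substituting this back produces $(\lambda \mu)\, x_\lambda P_\omega(ab) x_\mu$, which is exactly the claim.

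I do not expect a genuine obstacle here: the entire content is packaged in the already-established fact that $x_\nu$ belongs to the multiplicative domain, and the identity then follows from two applications of the factorization together with the eigenrelation $P_\omega(x_\nu) = \nu x_\nu$. The only points requiring care are the bookkeeping of the scalar factors $\lambda$ and $\mu$, and the observation that $P_\omega(ab)$ must \emph{not} be simplified further, since the ordinary product $ab$ need not be $P_\omega$-fixed even when $a, b \in F(P_\omega)$.

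As an alternative, self-contained route that avoids appealing to the multiplicative domain, I could instead verify directly on the orthonormal basis $\left\{e_I : I \in \Lambda\right\}$ the commutation relations $l_i^* x_\lambda = \lambda x_\lambda l_i^*$ and $x_\mu l_i = \mu l_i x_\mu$ (each a one-line computation on $e_I$, splitting into the cases where $l_i^*$ does or does not annihilate). Substituting these into $P_\omega(x_\lambda (ab) x_\mu) = \sum_{i \in \Theta} \omega_i\, l_i^* x_\lambda (ab) x_\mu l_i$, pulling the $i$-independent operators $x_\lambda$ and $x_\mu$ outside the sum, and collecting the scalar $\lambda \mu$ leaves $\sum_{i \in \Theta} \omega_i\, l_i^* (ab) l_i = P_\omega(ab)$, giving the same conclusion.
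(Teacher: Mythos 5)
Your proposal is correct, and your primary argument takes a genuinely different route from the paper's. The paper proves the identity by a direct matrix-element computation: it evaluates $\langle P_\omega(x_\lambda a b x_\mu)(e_I), e_J\rangle$ against the orthonormal basis $\{e_I : I \in \Lambda\}$, peels off the scalar $\mu^{|I|+1}$ coming from the action of $x_\mu$, moves $x_\lambda$ across the inner product to produce $\lambda^{|J|+1}$, recognizes the remaining sum as $\langle P_\omega(ab)(e_I), e_J\rangle$, and reassembles --- essentially your fallback route, phrased with inner products rather than with the commutation relations $l_i^* x_\lambda = \lambda x_\lambda l_i^*$ and $x_\mu l_i = \mu l_i x_\mu$. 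Your main argument instead reuses the fact, established in the proposition immediately preceding the lemma, that each $x_\nu$ lies in the multiplicative domain of $P_\omega$, together with the eigenrelation $P_\omega(x_\nu) = \nu x_\nu$ from \Cref{strictincl}(i); two applications of the factorizations $P_\omega(mc) = P_\omega(m)P_\omega(c)$ and $P_\omega(cm) = P_\omega(c)P_\omega(m)$ then give the identity with no computation at all. There is no circularity in doing so, since the multiplicative-domain property was proved directly (via $P_\omega(x_\nu)^* P_\omega(x_\nu) = 1 = P_\omega(x_\nu^* x_\nu)$ and unitarity of $x_\nu$) without reference to this lemma. What your route buys is brevity and generality: the same argument works verbatim for any normal ucp map and any unitary peripheral eigenvector, and it makes the iterated formula $P_\omega^n(x_\lambda (ab) x_\mu) = (\lambda\mu)^n x_\lambda P_\omega^n(ab) x_\mu$, used right after the lemma, immediate. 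What the paper's computation buys is self-containedness: it needs only the definitions of $P_\omega$ and $x_\lambda$, not Choi's multiplicative-domain theorem. Your closing caution that $P_\omega(ab)$ must not be simplified further --- since $ab$ need not be $P_\omega$-fixed even when $a, b \in F(P_\omega)$ --- is also well placed.
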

\begin{proof}
	For $I,J \in \Lambda$, we obtain,
	\begin{align*}
		\lab P_\omega (x_\lambda a b x_\mu)(e_I), e_J \rab &= \mu^{\abs{I}+1} \us{i \in \Theta}{\sum} \omega_i \lab x_\lambda a b (e_{iI}), e_{iJ} \rab \\
		&= \mu^{\abs{I}+1} \us{i \in \Theta}{\sum} \omega_i \lab a b (e_{iI}), x_\lambda^* (e_{iJ}) \rab \\
		&= \mu^{\abs{I}+1} \us{i \in \Theta}{\sum} \omega_i \lab a b (e_{iI}), \ol{\lambda}^{\abs{J}+1} e_{iJ} \rab \\
		&= \mu^{\abs{I}+1} \lambda^{\abs{J}+1} \us{i \in \Theta}{\sum} \omega_i \lab a b (e_{iI}),  e_{iJ} \rab \\
		&= \mu^{\abs{I}+1} \lambda^{\abs{J}+1} \lab P_\omega(ab)(e_I), e_J \rab \\
		&= (\lambda \mu) \, \lab (P_\omega(ab) x_\mu )(e_I), \, x_{\ol \lambda} (e_J) \rab \\
		&= (\lambda \mu) \, \lab (x_\lambda P_\omega(ab) x_\mu )(e_I), \, e_J \rab
	\end{align*}
	Thus, we have the desired result. 
\end{proof}

Let $x_\lambda a$ and $b x_\mu$ be two elements in $E(P_\omega)$, where $a,b \in F(P_\omega)$ and $\lambda, \mu \in \mathbbm{T} \setminus \left\{1 \right\} $. Making use of the preceding lemma, we get that, $P_\omega (x_\lambda (ab) x_\mu) = (\lambda \mu) \, x_\lambda P_\omega(ab) x_\mu $. Similarly, we get $P_\omega^2 (x_\lambda (ab) x_\mu) = (\lambda \mu)^2 \, x_\lambda P_\omega^2(ab) x_\mu$. Iterating this process, we obtain, $P_\omega^n \, (x_\lambda (ab) x_\mu) = (\lambda \mu)^n x_\lambda P_\omega^n(ab) x_\mu$. Thus, we get that,
\begin{align*}
	(x_\lambda a) \circ (b x_\mu) &= \t{SOT}-\us{n \to \infty}{\t{lim}} (\lambda \mu)^{-n} P_\omega^n \, (x_\lambda (ab) x_\mu) \\
	&=  \t{SOT}-\us{n \to \infty}{\t{lim}} (\lambda \mu)^{-n} (\lambda \mu)^n x_\lambda P_\omega^n (ab) x_\mu \\
	&= x_\lambda (a \circ b) x_\mu .
\end{align*}

\subsection{Peripheral Poisson boundary as a Hilbert C*-bimodule over the Poisson boundary}\

We make use of the injectivity of $F(P_\omega)$ to build a conditional expectation from $P(P_\omega)$ to $F(P_\omega)$ and further use it to induce a bimodule structure on $P(P_\omega) $.

\begin{rem}
	In \cite{BKT}, for any normal ucp $\tau : M \to M $ and for each $\lambda \in \mathbbm{T}$, $E_\lambda (\tau)$ has been equipped with natural left and right actions of $F(\tau)$, and with a $F(\tau)$-valued inner product (where $\lab x, y \rab = x^* \circ y $ for $x, y \in E_\lambda (\tau)$), making it a Hilbert C*-bimodule over $F(\tau)$. This approach is not going to work in general if we want to make $P(\tau)$ a pre-Hilbert C*-bimodule over $F(\tau)$. This is because if $x \in E_\lambda (\tau)$ and $y \in E_\mu (\tau)$, for $\lambda \neq \mu$,  then, $$x^* \circ y \in E_{\ol{\lambda} \mu} (\tau) \neq F(\tau) $$
	In the subsequent results, we establish $P(P_\omega)$ as a pre-Hilbert C*-bimodule over $F(P_\omega)$
\end{rem}

\begin{prop}\label{CEppb}
	There is a conditional expectation $E : P(P_\omega) \to F(P_\omega) $.
\end{prop}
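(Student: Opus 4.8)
The plan is to produce $E$ not by restricting some concretely given map, but by \emph{extending} the identity map of $F(P_\omega)$ using the injectivity of the Poisson boundary, exactly as signposted in the preceding remark, and then to upgrade the resulting projection to a conditional expectation via Tomiyama's theorem. The first ingredient I would record is that $F(P_\omega)$ is an injective von Neumann algebra. This is a general feature of noncommutative Poisson boundaries: taking a point-weak-$*$ cluster point $E_\infty$ of the Ces\`aro averages $\tfrac{1}{n}\us{k=0}{\sum}^{n-1} P_\omega^k$ yields a ucp map on $B(\mcal F(H))$ with $P_\omega E_\infty = E_\infty$ and $E_\infty|_{F(P_\omega)} = \t{id}$, so $E_\infty$ is a ucp idempotent whose range is precisely $F(P_\omega)$. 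By the Choi--Effros theorem the operator-system structure (matrix order and involution) that $F(P_\omega)$ inherits from $B(\mcal F(H))$ coincides with the one attached to the C*-algebra $(F(P_\omega),\circ)$, and being the range of a ucp idempotent on the injective algebra $B(\mcal F(H))$, this operator system is injective.

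Next I would use that, by what has been established earlier in this section, $F(P_\omega)$ sits inside $(P(P_\omega),\circ)$ as a unital C*-subalgebra. Since a unital C*-subalgebra inclusion is automatically a complete order embedding of operator systems (positivity, at every matrix level, in the subalgebra agrees with positivity in the ambient C*-algebra), the inclusion $F(P_\omega) \hookrightarrow P(P_\omega)$ is a unital complete order embedding. Applying the injectivity of $F(P_\omega)$ to the unital completely positive map $\t{id} : F(P_\omega) \to F(P_\omega)$ along this embedding, I obtain a ucp map $E : P(P_\omega) \to F(P_\omega)$ with $E|_{F(P_\omega)} = \t{id}$. In particular $E$ is idempotent, hence a norm-one projection of the C*-algebra $(P(P_\omega),\circ)$ onto its C*-subalgebra $F(P_\omega)$.

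Finally I would invoke Tomiyama's theorem: a norm-one projection from a C*-algebra onto a C*-subalgebra is automatically a conditional expectation, so $E$ is completely positive, is an $F(P_\omega)$-bimodule map (i.e. $E(a \circ x \circ b) = a \circ E(x) \circ b$ for $a,b \in F(P_\omega)$), and satisfies the Kadison--Schwarz inequality $E(x^* \circ x) \geq E(x)^* \circ E(x)$. This delivers the desired conditional expectation $E : P(P_\omega) \to F(P_\omega)$.

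The step I expect to be the main obstacle is conceptual rather than computational: because $P(P_\omega)$ carries the modified product $\circ$ and is \emph{not} a C*-subalgebra of $B(\mcal F(H))$ under the ordinary product, one cannot simply restrict an ambient projection, and the extension must be carried out purely at the level of abstract operator systems. The two compatibility points to check carefully are that $F(P_\omega)$ is genuinely injective in this abstract sense---which holds because injectivity depends only on the operator-system structure of $F(P_\omega)$, and by Choi--Effros this structure is the same whether $F(P_\omega)$ is viewed inside $B(\mcal F(H))$ or as the C*-algebra $(F(P_\omega),\circ)$---and that $F(P_\omega) \hookrightarrow P(P_\omega)$ is a complete order embedding, so that the injective-extension property actually applies. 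Once these two points are in place, the extension together with the appeal to Tomiyama is routine.
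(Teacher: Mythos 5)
Your proposal is correct and follows essentially the same route as the paper: the paper likewise deduces injectivity of $F(P_\omega)$ (citing \cite[Remark 15]{BBDR}, which your Ces\`aro-average argument reproves) and extends the identity map of $F(P_\omega)$ along the C*-subalgebra inclusion $F(P_\omega) \subseteq P(P_\omega)$ to a completely positive idempotent contraction $E : P(P_\omega) \to F(P_\omega)$. Your additional care about the abstract operator-system compatibility (Choi--Effros) and the Tomiyama upgrade simply makes explicit what the paper leaves implicit.
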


\begin{proof}
	Using \cite[Remark 15]{BBDR}, we get that $F(P_\omega)$ is injective. Consider the identity map $\psi : F(P_\omega) \to F(P_\omega)$. Now $F(P_\omega)$ being a C*-subalgebra of $P(P_\omega)$ and $F(P_\omega)$ being injective, the map $\psi : F(P_\omega) \to F(P_\omega)$ extends to a completely positive idempotent contraction $E : P(P_\omega) \to F(P_\omega) $. This concludes the proposition.  
\end{proof}

\begin{thm}
	$P(P_\omega)$ becomes a pre-Hilbert C*-bimodule over the Poisson boundary $F(P_\omega) $.
\end{thm}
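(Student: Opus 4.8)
The plan is to exhibit the data of a pre-Hilbert C*-bimodule on $P(P_\omega)$ over $F(P_\omega)$ and verify the defining axioms, using the conditional expectation $E$ produced in \Cref{CEppb} as the source of the inner product. First I would specify the three pieces of structure. The left and right $F(P_\omega)$-actions are simply restrictions of the C*-algebra product `$\circ$' on $P(P_\omega)$: for $a, b \in F(P_\omega)$ and $x \in P(P_\omega)$ set $a \cdot x \coloneqq a \circ x$ and $x \cdot b \coloneqq x \circ b$. Since $F(P_\omega)$ is a C*-subalgebra of $(P(P_\omega), \circ)$ and `$\circ$' is associative, these are genuine commuting bimodule actions, and $1 \cdot x = x \cdot 1 = x$ because the unit of $P(P_\omega)$ lies in $F(P_\omega)$. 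The $F(P_\omega)$-valued inner product I would define by
\[ \lab x, y \rab \coloneqq E\left( x^* \circ y \right) \ \ \t{for} \ \ x, y \in P(P_\omega), \]
where $x^*$ is the adjoint in the C*-algebra $(P(P_\omega), \circ)$ (equivalently the image under $T$ of the adjoint upstairs) and $E$ is the conditional expectation of \Cref{CEppb}.

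Next I would verify the inner-product axioms one at a time. Sesquilinearity is immediate from linearity of `$\circ$' in each variable and linearity of $E$. For the module property $\lab x, y \circ b \rab = \lab x, y \rab \circ b$ I would use that $E$ is an $F(P_\omega)$-bimodule map: since $E$ is a conditional expectation onto $F(P_\omega)$, it satisfies $E(z \circ b) = E(z) \circ b$ and $E(a \circ z) = a \circ E(z)$ for $a, b \in F(P_\omega)$, and then $\lab x, y \circ b\rab = E(x^* \circ y \circ b) = E(x^* \circ y) \circ b = \lab x, y \rab \circ b$; the left-linearity relation $\lab a \cdot x, y\rab = a^* \circ \lab x, y\rab$ follows by the same token together with $(a \circ x)^* = x^* \circ a^*$. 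Self-adjointness $\lab x, y \rab^* = \lab y, x\rab$ reduces to $E(z)^* = E(z^*)$, which holds because $E$ is completely positive hence $*$-preserving. Positivity $\lab x, x \rab = E(x^* \circ x) \geq 0$ is exactly the statement that $E$ is positive applied to the positive element $x^* \circ x$ of $(P(P_\omega), \circ)$.

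The one genuinely delicate point — and the step I expect to be the main obstacle — is definiteness: showing $\lab x, x \rab = 0$ forces $x = 0$, which is what the prefix ``pre-Hilbert'' still requires (completeness is not claimed). Here the key lever is \Cref{centerthm}, asserting that the relative commutant of $F(P_\omega)$ in $P(P_\omega)$ is trivial, together with the vector state $\phi$ and its multiplicativity-type formulas from \Cref{formulaprop2}. Concretely, I would first relate $\phi \circ E$ to $\phi$ on $F(P_\omega)$, or directly argue that if $E(x^* \circ x) = 0$ then, by faithfulness of a suitable trace or of $\phi$ on the commutative pieces, the diagonal matrix coefficients $\lab (x^* \circ x)\, e_I, e_I\rab$ all vanish; the structural identities of \Cref{formulaprop} and the computation underlying \Cref{centerthm} should then propagate this to every matrix coefficient, giving $x = 0$. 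The cleanest route is to show that the state $\phi$ on $P(P_\omega)$ is faithful on positive elements, so that $\phi(x^*\circ x)=0 \Rightarrow x=0$, and to note $\phi(x^* \circ x) = \phi(E(x^* \circ x))$ since $\phi$ restricted to $F(P_\omega)$ factors through $E$; faithfulness of $\phi$ is where the triviality of the relative commutant and the explicit action of the $r_I$ on the vacuum do the real work.
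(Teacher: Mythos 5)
Your proposal follows the paper's proof essentially verbatim in its main structure: the left and right actions are the restrictions of the product $\circ$ to the C*-subalgebra $F(P_\omega)$, and the inner product is built from the conditional expectation $E$ of \Cref{CEppb} (the paper takes $\lab x, y \rab = E(x \circ y^*)$, you take $E(x^* \circ y)$; this is only a left-versus-right convention). Your verification of sesquilinearity, the $F(P_\omega)$-bimodule property of $E$, self-adjointness, and positivity is precisely the content that the paper compresses into ``since $E$ is a conditional expectation, it is easy to verify,'' so up to that point the two arguments coincide.

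The divergence is your treatment of definiteness. The paper's own proof never addresses it, so relative to the paper you are not missing anything; in fact you are the only one of the two to flag the issue. However, your sketched route for it has a genuine gap: the identity $\phi(x^* \circ x) = \phi(E(x^* \circ x))$ requires $\phi \circ E = \phi$ on $P(P_\omega)$, and there is no reason this should hold, because $E$ is not constructed concretely but abstractly, as an idempotent completely positive extension of the identity map supplied by injectivity of $F(P_\omega)$; such an $E$ need be neither $\phi$-preserving nor faithful. Note that definiteness of $\lab \cdot, \cdot \rab$ is exactly faithfulness of $E$ (if $E(x^* \circ x) = 0$ for some $x \neq 0$, the form is degenerate), and neither the paper nor your proposal establishes that; the faithfulness of $\phi$ on positive elements of $(P(P_\omega), \circ)$ that you hope to extract from \Cref{centerthm} and \Cref{formulaprop2} would not suffice anyway without the unproven intertwining $\phi \circ E = \phi$. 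So, judged against the paper: your proposal is the same argument carried out in more detail, and the one step where it goes beyond the paper is incomplete --- but that incompleteness is a defect shared by, indeed unacknowledged in, the paper's own proof, which would need either faithfulness of $E$ or a passage to the quotient by the null space $\left\{ x : E(x^* \circ x) = 0 \right\}$ to fully justify the term ``pre-Hilbert.''
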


\begin{proof}
	$F(P_\omega)$ being a C*-subalgebra of $P(P_\omega)$, has natural left and right actions on $P(P_\omega)$. Now using the conditional expectation $E : P(_\omega) \to F(P_\omega)$ obtained in \Cref{CEppb}, we obtain a $F(P_\omega)$-valued inner product, \[\lab x, y \rab = E(x \circ y^*) \ \ \t{for} \ x, y \in P(P_\omega) . \]
	Since $E$ is a conditional expectation, it is easy to verify that $P(P_\omega)$ becomes a pre-Hilbert C*-bimodule over $F(P_\omega)$. This completes the proof.
\end{proof}

\begin{rem}
	We believe that the condition expectation obtained in \Cref{CEppb} is never of finite index.
\end{rem}

\begin{rem}
	Given a unitary $U$ on $H$, consider the corresponding second quantization $\Gamma_U$ on $\mcal F(H)$ and the associated automorphism $\widetilde \Gamma_U$ of $B(\mcal F(H))$. In \cite{BBDR}, the question whether the restriction of $\widetilde \Gamma_U$ on $F(P_\omega)$ becomes an automorphism of $F(P_\omega)$ was addressed. It was shown that when the sequence of positive real numbers $\omega = \left(\omega_i\right)_{i \in \Theta}$ with $\us{i \in \Theta} \sum \omega_i = 1$ is not constant the question is not true. When the Hilbert space $H$ is finite dimensional (say, $\t{dim}(H) = n > 1$) and $\omega$ is the constant sequence $\frac{1}{n}$ (that is, $\omega_1 = \omega_2 = \cdots = \omega_n = \frac{1}{n}$) then it was proved in \cite{BBDR} that, the restriction of $\widetilde \Gamma_U$ on $F(P_\omega)$ is an automorphism of $F(P_\omega)$. An easy application of the same arguments will reveal that $\widetilde \Gamma_U$ is also an automorphism of $P(P_\omega) $.
\end{rem}  		 

\begin{rem}
	When $\t{dim}(H) = 1$, $P(P_\omega)$ turns out to be the peripheral Poisson boundary of $\lambda$-Toeplitz operators. 
	In \cite{BKT}, the peripheral Poisson boundary of $\lambda$-Toeplitz operators was discussed. It was shown that the corresponding peripheral Poisson boundary is not a von Neumann algebra. We also believe that for $\t{dim}(H) > 1$, $P(P_\omega)$ will also not be a von Neumann algebra.  
\end{rem}



\end{document}